\def\input@path{{/Users/andrewpowell/Downloads/}}
\providecommand{\tabularnewline}{\\}
\numberwithin{equation}{section}
\numberwithin{figure}{section}
\theoremstyle{plain}
\newtheorem{thm}{\protect\theoremname}
\theoremstyle{remark}
\newtheorem{rem}[thm]{\protect\remarkname}
\providecommand{\remarkname}{Remark}
\providecommand{\theoremname}{Theorem}
\begin{document}
\title{Topology, Cardinality, Metric Spaces and GCH}
\keywords{Baire Category, General Topology, Generalized Continuum Hypothesis,
Metric Space, Set Theory}
\subjclass[2000]{03E17 Cardinal characteristics of the continuum}
\address{Dr. Andrew Powell, Honorary Senior Research Fellow, Institute for
Security Science and Technology, Level 2 Admin Office Central Library,
Imperial College London, South Kensington Campus, London SW7 2AZ,
United Kingdom }
\email{andrew.powell@imperial.ac.uk}
\author{Andrew Powell}
\begin{abstract}
This is a paper that aims to interpret the cardinality of a set in
terms of Baire Category, \emph{i.e.} how many closed nowhere dense
sets  can be deleted from a set before the set itself becomes negligible.
To do this natural tree-theoretic structures such as the Baire topology
are introduced, and the Baire Category Theorem is extended to a statement
that a $\aleph$-sequentially complete binary tree representation
of a Hausdorff topological space that has a clopen base of cardinality
$\aleph$ and no isolated or discrete points is not the union of $<\aleph+1$-many
nowhere dense subsets for cardinal $\aleph\ge\aleph_{0}$, where a
$\aleph$-sequentially complete topological space is a space where
every function $f:\aleph\rightarrow\{0.1\}$ is such that $(\forall x)(x\in f\rightarrow x\in\in X)\rightarrow(f\in X)$.
It is shown that if $\aleph<\left|X\right|\le2^{\aleph}$ for $\left|X\right|$
the cardinality of a set $X$, then it is possible to force $\left|X\right|-\aleph\times\left|X\right|\ne\emptyset$
by deleting a dense sequence of $\aleph$ specially selected clopen
sets, while if any dense sequence of $\aleph+1$ clopen sets are deleted
then $\left|X\right|-(\aleph+1)\times\left|X\right|=\emptyset$. This
gives rise to an alternative definition of cardinality as the number
of basic clopen sets (intervals in fact) needed to be deleted from
a set to force an empty remainder. This alternative definition of
cardinality is consistent with and follows from the Generalized Continuum
Hypothesis (GCH), which is shown by exhibiting two models of set theory,
one an outer (modal) model, the other an inner, generalized metric
model with an information minimization principle.
\end{abstract}

\maketitle

\section{Introduction}

This paper is experimental in the sense that there is very little
recent relevant literature in the subject of this paper, and the paper
has not been peer reviewed. For these reasons, please email the author
if you find any errors or any arguments lack clarity.\\
\\
In this paper a natural topology is outlined on the natural numbers,
real numbers and sets higher up in the von Neumann cumulative hierarchy
of pure sets\footnote{\cite{key-6} is used as the standard reference for motivating the
axioms of set theory (Zermelo-Fraenkel set theory with the Axiom of
Choice) and \cite{key-2-0}is the standard reference for developments
in Zermelo-Fraenkel set theory.}, which leads to a change in the definition of cardinality of set
in order to support the view that cardinality measures how many topologically
negligible sets can be deleted from a set before the set itself becomes
negligible. It is then shown that there are models of set theory in
which the change of definition of cardinality can be performed (which
is exactly when the Generalized Continuum Hypothesis, GCH, holds).
\\
\\
Before we begin with the development of a natural topology, it is
worth noting some assumptions about the universe of sets. Firstly,
we identify the set of subsets of a set $X$ of cardinality $\aleph$
with the set of binary sequences of length $\aleph$, called \emph{binary
$\aleph$-sequences}, which are functions $f:\aleph\rightarrow\{0.1\}$,
and members of the functions $\langle\alpha,b\rangle$ are called
\emph{nodes}. This is possible by fixing an enumeration of $X$,  $\langle x_{\alpha}:\alpha<\aleph\rangle$
(by the Axiom of Choice), and for any subset $Y\subseteq X$ forming
the binary \emph{$\aleph$-}sequence $\langle b_{\alpha}:(x_{\alpha}\in Y\rightarrow b_{\alpha}=1)\vee(x_{\alpha}\notin Y\rightarrow b_{\alpha}=0)\rangle$.
Thus a subset of $X$ can be identified with a binary $\aleph$-sequence,
and a set of subsets of $X$ can be identified with a set of binary
$\aleph$-sequences. It is natural to think about any set as a \emph{tree}
of binary $\aleph$-sequences for some cardinal $\aleph$, where subtrees
may \emph{split} from a given $\aleph$-sequence at a given node,
if we allow a tree to include the degenerate case where all members
of the set are subsets of a single branch of the tree (\emph{i.e}.
the tree is a line). It is an obvious but important fact that a tree
formed by binary $\aleph$-sequencs is a \emph{binary tree}, \emph{i.e}.
a tree in which every node has at most two successor nodes. \\
\\
Representation by binary trees also suggests a property of sets that
will appear throughout this paper, namely the property of a set $X$
corresponding to every binary $\aleph$-sequence through the tree
representing a member of $X$. This property is a kind of completeness,
but is in general weaker than compactness (unless $\aleph\le\aleph_{0}$).
It is called \emph{$\aleph$-sequential completeness}. Logically \emph{$\aleph$}-sequential
completeness has the form\linebreak{}
 $(\forall f:\aleph\rightarrow\{0,1\})((\forall x)(x\in f\rightarrow x\in\in X)\rightarrow(f\in X))$,
where $x\in\in y$ is defined as $(\exists z)(x\in z\wedge z\in y)$.
Like completeness in a metric space, $\aleph$-sequential completeness
does correspond to a generalized metric condition. \emph{$\aleph$-}sequential
completeness is also a closure condition, but it is stronger than
closure because closure depends on which sets are defined to be open.
It is in fact a form of absolute closure\footnote{\cite{key-4-0} is a reference for a notion of \emph{absolute closure},
defined as 'A Hausdorff space $X$ is called absolutely closed if
$X$ is closed in every Hausdorff space in which it is imbedded''
(see \cite{key-4-0} Definition 1.1).}, because the closure does not depend on the embedding space.\footnote{\emph{$\aleph$-sequential completeness} is not the same as \emph{compactness}
(because for $\aleph>\aleph_{0}$ there are infinite covers without
finite subcovers) or \emph{sequential compactness} (because $\aleph$-sequences
are longer than $\aleph_{0}$-sequences for $\aleph>\aleph_{0}$).
In particular $\aleph$-sequential completeness is not the same as
the Stone-\v{C}ech compactification, because $\aleph$-sequentially
complete Baire spaces, unlike Stone spaces over an infinite power
set, are not compact for $\aleph>\aleph_{0}$, and Stone spaces over
an infinite power set are larger ($=2^{2^{\aleph}}$) and richer than
the power set with an $\aleph$-sequentially complete Baire topology
(cardinality $=2^{\aleph}$), see \cite{key-1-0} Theorem 4.3 p. 143
\& p. 146. \cite{key-8} and the online $\pi$-base resource give
an excellent view of the landscape of the topological properties of
topological spaces.}\\
\\
We can also note that by the same argument as above any set can be
considered as a (possibly infinitely long) binary sequence. An ordinal
number, $\alpha$, can be coded (non-uniquely) as a constant sequence
of 1s of length $\alpha$, but in order to associate $\alpha<\aleph$
with a unique binary $\aleph$-sequence, $\alpha$ is represented
as an initial sequence of $\alpha$ 1s followed by a terminal sequence
of 0s. $x\subseteq y$ if $x_{\alpha}\le y_{\alpha}$ for all $\alpha<\aleph$
where $x_{\alpha}$ and $y_{\alpha}$ are binary representations at
position $\alpha$ in a $\aleph$-sequence of sets $x$ and $y$.\\
\\
 It should be apparent that the universe of sets can be regarded as
a binary sequence representation of the von Neumann hierarchy of pure
sets, $V_{0}=\emptyset$, $V_{\alpha+1}=\{x:x\subseteq V_{\alpha}\}$
and $V_{\lambda}=\bigcup_{\alpha<\lambda}V_{\alpha}$ for $\lambda$
a limit ordinal. Binary\emph{ }$\aleph$-sequences first appear in
$V_{\alpha}$ for some ordinal $\alpha$, and if $\aleph$ is an infinite
cardinal then $\alpha>\omega$, where $\omega$ is the least ordinal
of cardinality $\aleph_{0}$.

\section{A natural topology of the natural numbers}

Consider a topology on the natural numbers with closed sets of the
form $u_{n}=\{m\in N:m>n\}$ as well as \emph{$\slashed{O}$ }and
\emph{N}, where $N$ is the set of natural numbers. These sets are
closed sets because $\bigcap_{m<i<n}u_{i}=u_{n}$, $\bigcap_{m<i<\omega}u_{i}=\slashed{O}$,
$u_{n}\cap N=u_{n}$, $u_{n}\cap\slashed{O}=\slashed{O}$ and $N\cap\slashed{O}=\slashed{O}$
for natural numbers $m,\:n$ and $n>m+1$ where both appear in the
same formula. No new closed sets are introduced by taking finite unions
of closed sets, \emph{i.e}. $\bigcup_{i\in\{m_{0},\dots,m_{n}\}}u_{i}=u_{m_{0}}$
if $m_{0}\le\ldots\le m_{n}$ for $m_{i}\in N$. Rephrasing these
statements, it is easy to see that $u_{j}\subset u_{i}$ if $j>i$
and for any natural numbers $m,\:n>m+1$, $\bigcap_{m<i<n}u_{i}\neq\slashed{O}$
and $\bigcap_{m<i<\omega}u_{i}=\slashed{O}$. Define open sets to
be $d_{n}=N-u_{n}$ and $\slashed{O}$ and $N$. Then we see $N-\bigcup_{m<i<n}d_{i}\ne\slashed{O}$
and $N-\bigcup_{m<i<\omega}d_{i}=\slashed{O}$ for any natural numbers
$m,\:n>m+1$. If we note $\mid d_{n+1}\mid-\mid d_{n}\mid=1$, then
we have $\left|N\right|\ne\sum_{i=m}^{n}$1 and $\left|N\right|=\sum_{i=m}^{\omega}1$,
or in cardinality terms $\aleph_{0}\ne n-m$ and $\aleph_{0}=\mid\omega\mid\times1$.
These results are not surprising, but it is worth rephrasing: that
if we remove any finite set of open sets (not including $N$) from
$N$ we have a non-empty remainder, and if we remove any infinite
set of open sets from $N$ we have an empty remainder. This shows
in this topology that you cannot force\footnote{``Force'' is used in its everyday sense,\emph{ i.e.} it is possible
with some effort to do something. The powerful mathematical notion
of forcing, which amounts to giving a set a property by adding a consistent
set of finitely specified conditions, is related to developments later
in this paper. (See \cite{key-3} for a clear introduction to set-theoretic
forcing). } $\aleph_{0}$ to be finite and you cannot force $\aleph_{0}\ne\mid\omega\mid$.
\\
\\
We can state this as:
\begin{thm}
In a natural topology on the set of natural numbers, $N$, with closed
sets of the form $u_{n}=\{m\in N:m>n\}$ and \emph{$\slashed{O}$
}and \emph{$N$, }you cannot force $\aleph_{0}$ to be finite and
you cannot force $\aleph_{0}\ne\mid\omega\mid$.
\end{thm}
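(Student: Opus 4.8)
The plan is to read the word ``force'' in the operational sense suggested by the preceding discussion: to \emph{force} a property of the remainder $N-\bigcup_{i}d_{n_{i}}$ by choosing a family of basic open sets $d_{n_{i}}$ to delete, and to show that no such choice makes $\aleph_{0}$ behave finitely or makes it appear different from $|\omega|$. First I would record the two combinatorial facts already established just before the statement, recast as set identities: for any finite index set $F\subseteq N$ one has $N-\bigcup_{i\in F}d_{i}=\bigcap_{i\in F}u_{i}=u_{\max F}$, which is nonempty (indeed infinite), whereas $N-\bigcup_{i<\omega}d_{i}=\bigcap_{i<\omega}u_{i}=\slashed{O}$. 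These follow from the nesting $u_{j}\subset u_{i}$ for $j>i$, from $\bigcap_{m<i<\omega}u_{i}=\slashed{O}$, and from De Morgan duality between the $u_{n}$ and the $d_{n}=N-u_{n}$; all of this is exactly the content of the paragraph above, so the work here is only bookkeeping.

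Next I would translate these identities into cardinal statements. Since $|d_{n+1}|-|d_{n}|=1$ and $d_{n}=\{m\in N:m\le n\}$ has cardinality $n+1$, deleting $d_{0},d_{1},\dots,d_{n}$ removes only finitely many points and, crucially, leaves the infinite set $u_{\max F}$; hence no finite family of deletions can witness $|N|=\aleph_{0}$ as a finite sum $\sum_{i=m}^{n}1$. On the other hand the $\omega$-indexed family of deletions does exhaust $N$, and counting the increments gives $|N|=\sum_{i<\omega}1=|\omega|\times 1=|\omega|$. So the least number of basic open sets (other than $N$ itself) whose deletion empties $N$ is exactly $|\omega|$, and this number is $\aleph_{0}$: it is neither finite nor any cardinal distinct from $|\omega|$. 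From this the theorem follows: ``you cannot force $\aleph_{0}$ to be finite'' because every finite deletion leaves the nonempty (in fact infinite) remainder $u_{\max F}$, so no finite bound substitutes for $\aleph_{0}$ here; and ``you cannot force $\aleph_{0}\ne|\omega|$'' because the deletion process that does empty $N$ is indexed precisely by $\omega$ and the running count of deleted points is the cardinal sum $\sum_{i<\omega}1=|\omega|$, leaving no room for a different value.

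The only step that needs real care --- and the one I expect to be the main obstacle --- is not the topology or the arithmetic, both of which are routine, but pinning down the informal word ``force'' tightly enough that the two identities genuinely \emph{entail} the impossibility claims rather than merely illustrating them. I would handle this by fixing the definition: to \emph{force} a statement $P$ about the cardinality of $N$ is to exhibit a family $\{d_{n_{i}}\}$ of basic open sets, none of which is $N$, whose deletion makes $P$ true of the cardinality of the remainder together with the count $\sum_{i}(|d_{n_{i}+1}|-|d_{n_{i}}|)$ of deleted increments. Under this reading the finite case always leaves a nonempty remainder and the $\omega$ case always produces the count $|\omega|=\aleph_{0}$, so the two desired impossibilities close immediately.
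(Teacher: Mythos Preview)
Your proposal is correct and follows essentially the same approach as the paper: the paper's argument is precisely the paragraph preceding the theorem, which establishes the two set identities $N-\bigcup_{m<i<n}d_{i}\neq\slashed{O}$ and $N-\bigcup_{m<i<\omega}d_{i}=\slashed{O}$ via the nesting of the $u_{n}$, converts them to the cardinal equations $\aleph_{0}\neq n-m$ and $\aleph_{0}=|\omega|\times 1$ using $|d_{n+1}|-|d_{n}|=1$, and then reads off the two impossibility claims. Your version is in fact slightly more careful than the paper's, since you attempt to pin down the operational meaning of ``force'' explicitly, whereas the paper simply footnotes that the word is used ``in its everyday sense''; but the mathematical content is identical.
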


It is possible to reverse the roles of the open and closed sets, but
essentially the same topology arises. If, however, (other than closed
and open sets $\emptyset$ and $N$) closed sets have the form $u_{n}=\{m\in N:m\le n\}$
and open sets have the form $d_{n}=\{m\in N:m<n\}$, then all open
sets are also closed, and all closed sets also open because $m\le n$
if any only if $m<n+1$. But then each set of the form $\{n\}$, where
$n\in N$, and $\slashed{O}$ and $N$ are clopen (open and closed).
It follows that $X$ has the discrete topology.

\section{A natural topology of the real numbers}

A natural generalization of taking terminal segments of the natural
numbers as closed sets is to take closed sets of the real numbers
to be the set of real numbers in a set $X$ (expressed as a set of
binary sequences) that agree with some $x\in X$ on a particular initial
segment of $x$, say $\langle x_{m}:m<n\rangle$ where $x_{m}\in\{0,1\}$,
but which do not include $x_{n}$ and therefore $x$. To be precise,
a closed set is a set of the form $u_{n}(x)=\{y\in X:$$y_{n}\ne x_{n}\wedge(\forall m<n)(x_{m}=y_{m})\}$
for natural number $n>0$. This is a variation of the Baire topology,
where open sets extend a finite sequence $\langle x_{m}:m<n\rangle$.\footnote{See \cite{key-4} s. 2.1, p. 222, for example where the finite (binary
for definiteness) sequence $s$ is the initial segment of $x$. The
case of binary sequences gives rise to the Cantor topology.} We can see that other closed sets need be added so that closed sets
are closed under intersection. We need to add as closed sets\emph{
$\slashed{O}$ }and \emph{X} and sets of the form $\{x\}$ for $x\in X$
because $\bigcap_{1<i<\omega}u_{n_{i}}(x_{i})=\{x\}$ is possible
for some sequence of closed sets $\langle u_{n_{i}}(x_{i}):1<i<\omega\rangle$.\footnote{An easy way to see that $\bigcap_{1<i<\omega}u_{n_{i}}(x_{i})=\{x\}$
is possible is to use the tree approach in the main text: at the $i$-th
split in the binary tree that represents $X$ choose a subtree which
contains a member of $u_{n_{i}}(x_{i})$, choosing one subtree (using
the Axiom of Choice, or choosing the one that starts with 0 if you
want to avoid the Axiom of Choice) if there is a choice. Then the
sequences of choices defines a path $x$ which may be in $\bigcap_{1<i<\omega}u_{n_{i}}(x_{i})$,
and will be if $x\in X$ because $x\in u_{n_{i}}(x_{i})$ for each
$1<i<\omega$. Conversely, for any $x\in X$ it is always possible
to construct a sequence of $u_{n_{i}}(x_{i})$ such that $\bigcap_{1<i<\omega}u_{n_{i}}(x_{i})=\{x\}$,
by choosing $u_{n_{i}}(x_{i})$ such that $x\in u_{n_{i}}(x_{i})$
at each split. } The topology can be thought of in terms of trees: if each member
of $X$ is a binary sequence $\omega\rightarrow\{0,1\}$, \emph{i.e.}
a binary \emph{$\omega$-sequence}, then every $x\in X$ is a \emph{branch}
of the tree and $u_{n}(x)$ is a subtree that splits from $x$ at
a particular node of the binary sequence $x$, \emph{i.e.} at $\langle n,b\rangle$
for natural number $n$ and $b\in\{0,1\}$. It is possible for a point
to be represented by two branches in the case where a binary sequence
is eventually constant. For example, $0111\ldots$ might represented
by $1000\ldots$ as well. But since there are countably many such
double representations, the use of a tree representation is appropriate
for studying uncountable sets of real numbers. In the following treatment
all \emph{isolated} members of $X$, $x\in X$ which have a highest
node $nd$ such that all other $y\in X$ split from $x$ at or below
$nd$, are deleted to simplify the exposition. As there are at most
countably many isolated members of $X$ because there are countably
nodes of type $nd$, isolated points are well-understood from a cardinality
perspective. Moreover, since it is possible for a point to become
isolated if other isolated points are removed, by transfinite induction
up to a countably infinite ordinal (deleting any $x\in X$ that is
covered only by isolated points of order $<\alpha$ at limit ordinal
$\alpha$), we can delete all isolated points and leave either the
empty set or a dense-in-itself kernel\footnote{A set is \emph{dense-in-itself} if it contains no isolated points.
The word ``kernel'' indicates that the isolated points have been
removed and that a non-empty set remains. The construction is from
\cite{key-1} p. 198.} of the set.\\
\\
It is also possible to remove closed sets of the form $u_{n}(x)$
in a way which is a generalized version of the construction of a Cantor
ternary set. Fix an enumeration of countably infinitely many $x\in X$,
say $\langle x_{\alpha<\omega}\rangle$, treated as branches of binary
sequences of length $\omega$, which are dense in $X$, \emph{i.e.}
every $x\in X$ is covered by an $\omega$-sequence of $x_{\alpha}$'s\footnote{This definition is equivalent to the standard definition of every
open neighbourhood of $x$ having non-empty intersection with the
dense set.}. Then for any finite ordinal $\alpha$ there is a highest node of
height $n(\alpha)$ at which $x_{\beta<\alpha}$ split from $x_{\alpha}$
(where $n(0)$ is the lowest node from which some branch splits from
$x_{0}$); then proceed along the branch $x_{\alpha}$ $r(\alpha)>0$
nodes from which some $u_{n(\alpha)+r(\alpha)}(x_{\alpha})$ splits,
and delete any branches that coincide with the terminal segment of
$x_{\alpha}$ from nodes of height $r(\alpha)+1$ onwards. If $x_{\alpha}$
has already been deleted then do nothing. For reference we will call
this branch deletion construction from set $Y\subseteq X$ $cntr(Y;x_{\alpha})$.
Finally, at ordinal $\omega$ take the intersection of all stages
of the construction $\alpha<\omega$. We can write the construction
$X_{0}=X$, $X_{\alpha+1}=cntr(X_{\alpha}x_{\alpha})$ for $\alpha<\omega$
and $X_{\omega}=\bigcap_{\alpha<\omega}X_{\alpha}$. \\
\\
\includegraphics[scale=0.5]{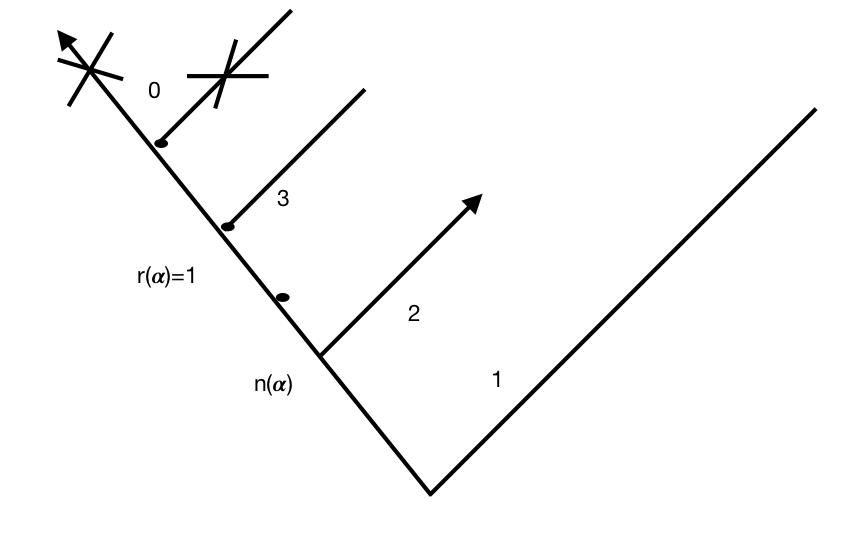}\\
\\
\emph{Figure 1:} \emph{Subtrees (clopen intervals) deleted from a
binary tree representation of a set. The construction proceeds $r(\alpha)$
nodes that have branches splitting from them (one empty node is skipped
in the diagram) them where $x_{\alpha-1}$ splits from $x_{\alpha}$
and deletes nodes $r(\alpha)+1$ and higher of $x_{\alpha}$.} \\
\\
The density of the sequence $\langle x_{\alpha<\omega}\rangle$ in
$X$ ensures that $X_{\alpha<\omega}\ne\emptyset$ because each non-empty
closed set $u_{n(\alpha)+m}(x_{\alpha})$ for $1\le m\le r(\alpha)$
will contain some $x_{\beta>\alpha}.$ The resulting Cantor sets,
$X_{\omega}(\langle x_{i<\omega}\rangle)$, are closed and nowhere
dense.\footnote{If you proceed along the branch $x_{\alpha}$ by exactly $1$ node,
then the intersection will form a single branch, \emph{i.e.} contain
exactly one point.} The reason why the Cantor sets are closed is that $u_{n}(x_{\alpha})$
are closed and open (clopen) since $u_{n}(x_{\alpha})$ contains all
of its limits points in $X$, and $X-u_{n}(x_{\alpha})$ contains
all of its limit points (so both are clopen), and the Cantor sets
constructed at ordinal $\omega$ have the form $X-\bigcup_{\alpha<\omega}u_{n(\alpha)+r(\alpha)}(x_{\alpha})$,
$i.e.$ the complement of an open set.\footnote{It is worth noting that $\bigcup u_{n}(x_{\alpha})$ is not in general
closed, because all $u_{n}(x_{\alpha})$ could split from a common
sequence that is $\notin X$. } In this paper $u_{n}(x)$ are known as \emph{clopen intervals}. To
see that a resulting Cantor set is nowhere dense, note that each clopen
interval is a maximally dense subset of itself, and since each clopen
interval in the tree has a clopen interval deleted from it because
the sequence $\langle x_{i<\omega}:x_{i}\in X_{\alpha}\rangle$ is
dense in $X_{\alpha}$, no subset of the Cantor set is dense in the
tree. \\
\\
While the tree model of a set of real numbers, $X$, is a strong visual
construction, there is a case where the model is not applicable, namely
where all $x\in X$ cover a single \emph{$\omega$}-sequence. In this
case, there are no clopen intervals splitting from the single \emph{$\omega$}-sequence.
The members of $X$ will either have a finite length, written $x_{n}$
for $n\in\omega$, or be an $\omega$-sequence, $x_{\omega}$. Each
$\{x_{n}\}$ is a closed set because $\{x_{n}\}$ contains its limit
points, since it contains only one point, and $X-\{x_{n}\}$ is closed
as its closure is $X-\{x_{n}\}$. Hence $\{x_{n}\}$ is clopen. $\{x_{\omega}\}$
is also closed because it contains its limit point, but is not open
(as $X-\{x_{\omega}\}$ has $x_{\omega}$ as a limit). But given that
all $\{x_{n}\}$ are isolated because they are discrete sets, we can
remove them, and leave the set $\{x_{\omega}\}$ or the empty set.
If $\{x_{\omega}\}$ exists, then it too is isolated because it is
now a clopen and therefore is a discrete set (as its complement is
the empty set). As $\left|X\right|\le\aleph_{0}$ and $X$ comprises
isolated points, this case has been sufficiently characterized from
a cardinality perspective.\\
\\
We can state this as:
\begin{thm}
In the Baire topology on a subset of the set of real numbers, X, that
comprises a set of binary sequences with a countable basis of clopen
intervals and no discrete or isolated points, the Cantor sets $X_{\omega}$
constructed from $X$ and a dense $\omega$-sequence $\langle x_{\alpha<\omega}\in X\rangle$
by $X_{0}=X$, $X_{\alpha+1}=cntr(X_{\alpha};x_{\alpha})$ for $\alpha<\omega$
and $X_{\omega}=\bigcap_{\alpha<\omega}X_{\alpha}$ are closed and
nowhere dense. 
\end{thm}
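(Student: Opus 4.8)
The plan is to handle closedness and nowhere density separately, the first being essentially immediate from observations already recorded above.

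For closedness, note first that each clopen interval $u_{n}(x)=\{y\in X:(\forall m<n)(y_{m}=x_{m})\wedge y_{n}\ne x_{n}\}$ is clopen in $X$ (closed by fiat, and open because it is pinned down by the finite data through height $n$, hence a member of the assumed clopen-interval basis). Unwinding the recursion, the step $X_{\alpha+1}=cntr(X_{\alpha};x_{\alpha})$ either does nothing, if $x_{\alpha}$ has already been removed, or deletes precisely the clopen interval $u_{n(\alpha)+r(\alpha)}(x_{\alpha})$. Hence $X_{\omega}=\bigcap_{\alpha<\omega}X_{\alpha}=X\setminus\bigcup\{u_{n(\alpha)+r(\alpha)}(x_{\alpha}):\alpha<\omega,\ x_{\alpha}\in X_{\alpha}\}$, which is $X$ minus a union of open sets, hence closed. (The same bookkeeping shows $x_{0}$ is never deleted, so $X_{\omega}\ne\emptyset$ in the non-degenerate situation being considered, though this is not needed below.)

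For nowhere density, since $X_{\omega}$ is closed it suffices to show it has empty interior, and since the clopen intervals form a basis this amounts to showing that no nonempty clopen interval $V=u_{n}(x)$ is contained in $X_{\omega}$; equivalently, that the deleted open set $D:=X\setminus X_{\omega}$ meets every such $V$ in a nonempty relatively open set. Fix $V=u_{n}(x)$. Because $X$ has no isolated points and $V$ is clopen, $V$ is dense-in-itself, so it splits into two disjoint nonempty clopen subtrees; by density of $\langle x_{\alpha<\omega}\rangle$ each subtree contains a term of the sequence, and these two terms are distinct, giving indices $\delta<\gamma$ with $x_{\delta},x_{\gamma}\in V$. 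Any two branches through $V=u_{n}(x)$ agree on positions $0,\dots,n$, so $x_{\delta}$ and $x_{\gamma}$ first differ at some position $\ge n+1$; hence $n(\gamma)$, the highest node at which an earlier term splits from $x_{\gamma}$, satisfies $n(\gamma)\ge n+1$, and so $n(\gamma)+r(\gamma)\ge n+1$ exceeds the level defining $V$. Consequently, if $x_{\gamma}$ survives to stage $\gamma$, every branch of the interval $u_{n(\gamma)+r(\gamma)}(x_{\gamma})$ deleted at that stage agrees with $x$ on $\langle x_{m}:m<n\rangle$ and flips at $n$, so this whole interval lies in $V\cap D$; and if instead $x_{\gamma}$ was deleted at an earlier stage, then $x_{\gamma}\in V\cap D$, with $V\cap D$ open. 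Either way $V$ contains a nonempty relatively open subset of $D$, so $X_{\omega}\cap V$ is not dense in $V$. As $V$ was an arbitrary basic open set, $X_{\omega}$ has empty interior, hence is nowhere dense.

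The step I expect to require the most care is the height estimate $n(\gamma)\ge n+1$ — that the two dense-sequence terms trapped inside $V$ must split strictly above the level defining $V$, so that the interval excised at the corresponding stage lands wholly inside $V$ rather than protruding from it. This is precisely where density of $\langle x_{\alpha<\omega}\rangle$ in $X$ does the real work, just as it did in guaranteeing each $X_{\alpha}\ne\emptyset$, and it is also why the degenerate single-branch configuration — already excluded here by the hypothesis of no discrete or isolated points — has to be set aside.
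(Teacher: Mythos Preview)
Your proof is correct and follows the same approach as the paper: closedness via $X_{\omega}=X\setminus\bigcup_{\alpha}u_{n(\alpha)+r(\alpha)}(x_{\alpha})$ being the complement of an open set, and nowhere density via the observation that density of $\langle x_{\alpha}\rangle$ forces every basic clopen interval to have a clopen subinterval removed. The paper states the nowhere-density step in a single sentence, whereas you supply the height estimate $n(\gamma)\ge n+1$ and the two-case analysis that actually justify it; this is the detail the paper elides, and your identification of it as the place where density does the real work is exactly right.
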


In terms of cardinality, note that $X_{\omega}$ can be empty, but
if $X$ is an uncountable \emph{sequentially complete}\footnote{Sequential completeness is not a topological notion as the real numbers
with the standard open interval topology is homeomorphic to the open
interval\emph{ $(0,1)$ }with the same open interval topology, but
the real numbers is sequentially complete and\emph{ $(0,1)$ }is not
because the constant 0 and constant 1\emph{ $\omega$}-sequences are
sequences through the tree but 0 and 1 are not members of\emph{ }$(0,1)$.
Sequential completeness is a metric notion in general, but in the
case of the Baire topology it is also set-theoretic (whether any given
sequence is a member of a set) and tree-theoretic (whether a sequence
is covered by other sequences that split from it at a node implies
that the sequence is a member of the set). There is little difference
in practice because a Baire space is metrizable with, for example,
the metric $d(x,x)=0$ and $d(x,y):=2^{-n}$ where \emph{n} is the
height of the lowest node such that $(x)_{n}\ne(y)_{n}$.} set of real numbers (in the sense that every \emph{$\omega$}-sequence
through the tree is a member of the set), then $X$ has cardinality
$2^{\aleph_{0}}$. This is so because every uncountable set of binary
sequences must cover an infinite binary tree (because each node is
covered by a binary sequence). Remove all isolated binary sequences.
Then each binary sequence must split at an arbitrarily high node (\emph{i.e.}
into 0 and 1), since otherwise the binary sequence would be an isolated
point; and by sequentially completeness, the tree created is isomorphic\footnote{That is, there is a one-to-one mapping of $X$ onto $2^{\omega}$
that preserves the branch structure.} to the set of all binary $\omega$-sequences, $2^{\omega}$, which
has cardinality $2^{\aleph_{0}}$. The subtree generated by the closed
nowhere dense set construction, $X_{\omega}$, also has cardinality
$2^{\aleph_{0}}$, as can be seen by labelling the remaining $u_{n}(x_{\alpha})$
1,2 \emph{et seq} and the deleted subtrees 0 and noting that nested
$u_{n}(x_{\alpha})$ give rise to sequences that can be labelled using
$\omega$-sequences that do not contain 0. We can conclude by sequential
completeness that each sequence is a member of $X$. In cardinality
terms we have $2^{\aleph_{0}}-\aleph_{0}\times2^{\aleph_{0}}=2^{\aleph_{0}}\ne\slashed{O}$.
\\
\\
\includegraphics[scale=0.3]{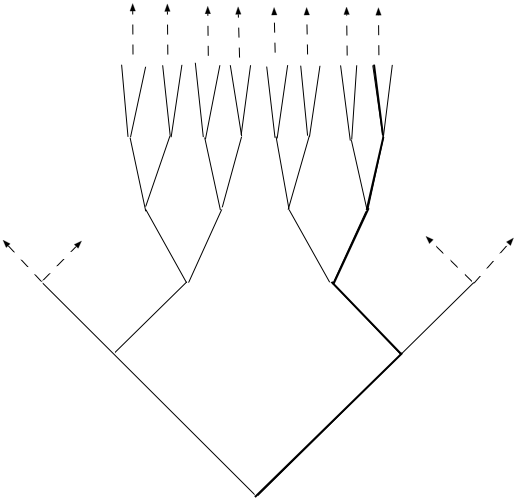}\\
\\
\emph{Figure 2: A tree representation of a set of binary sequences.
The bold line is a path through the tree. A set is sequentially complete
if every path through the tree is a member of the set.}\\
\\
We can state this as:
\begin{thm}
In a sequentially complete Baire topology on a set of real numbers,
$X$, that comprises a sequentially complete set of binary sequences
with a countable basis of clopen intervals and no discrete or isolated
points, the Cantor sets have cardinality $2^{\aleph_{0}}$ and the
process of deleting $\omega$ clopen intervals gives rise to the equation
$2^{\aleph_{0}}-\aleph_{0}\times2^{\aleph_{0}}=2^{\aleph_{0}}\ne\slashed{O}$.
\end{thm}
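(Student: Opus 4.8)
The plan is to prove the assertion in three steps: first pin down $\left|X\right|$, then pin down $\left|X_{\omega}\right|$, and finally read off the cardinal equation. First I would establish $\left|X\right|=2^{\aleph_{0}}$ using the argument already indicated before the statement. Since $X$ has a countable base of clopen intervals and no isolated or discrete points, every branch of the tree representing $X$ must split (into a $0$-successor and a $1$-successor, both within $X$) at arbitrarily high nodes; otherwise a branch possessing a highest splitting node would be an isolated point, contrary to hypothesis. Pruning to the subtree of splitting nodes therefore yields a tree order-isomorphic to the full binary tree, and by sequential completeness every branch of that tree is a member of $X$, so there is a branch-preserving bijection between $X$ and $2^{\omega}$. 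Hence $\left|X\right|=\left|2^{\omega}\right|=2^{\aleph_{0}}$.

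Next I would run the construction $X_{0}=X$, $X_{\alpha+1}=cntr(X_{\alpha};x_{\alpha})$, $X_{\omega}=\bigcap_{\alpha<\omega}X_{\alpha}$ and show $\left|X_{\omega}\right|=2^{\aleph_{0}}$. By the preceding theorem $X_{\omega}$ is closed and nowhere dense, and by the density of $\langle x_{\alpha<\omega}\rangle$ each stage $X_{\alpha}$ is non-empty, with each clopen interval $u_{n(\alpha)+m}(x_{\alpha})$ for $1\le m\le r(\alpha)$ still meeting $X_{\alpha}$. The crucial point is that the construction deletes \emph{at most one} clopen interval at each relevant node of each $x_{\alpha}$, so along every $x_{\alpha}$ there remain $r(\alpha)>0$ surviving splitting nodes; iterating through the dense enumeration, the surviving nodes form a perfect subtree $T\subseteq X_{\omega}$ (every node has two incomparable extensions in $T$). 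Labelling, as in the text, the surviving clopen intervals $u_{n}(x_{\alpha})$ by $1,2,\dots$ and the deleted ones by $0$, every descending nested chain of surviving intervals is coded by an $\omega$-sequence over $\{1,2,\dots\}$ that omits $0$, and since such a chain never enters a deleted interval, sequential completeness of $X$ puts the corresponding branch in $X_{\omega}$. As $\left|\{1,2,\dots\}^{\omega}\right|=\aleph_{0}^{\aleph_{0}}=2^{\aleph_{0}}$, this injects a set of size $2^{\aleph_{0}}$ into $X_{\omega}$; combined with $X_{\omega}\subseteq X$ and $\left|X\right|=2^{\aleph_{0}}$, the Cantor--Schr\"{o}der--Bernstein theorem gives $\left|X_{\omega}\right|=2^{\aleph_{0}}$.

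Finally, the cardinal equation follows by bookkeeping: the branches removed from $X$ in passing to $X_{\omega}$ are exactly those lying in the $\omega$-many deleted clopen intervals $u_{n(\alpha)+r(\alpha)}(x_{\alpha})$, each a subset of $X$ and hence of cardinality at most $2^{\aleph_{0}}$, so the total removed is at most $\aleph_{0}\times2^{\aleph_{0}}=2^{\aleph_{0}}$, while what remains is $X_{\omega}$ with $\left|X_{\omega}\right|=2^{\aleph_{0}}\ne\emptyset$; writing this out gives $2^{\aleph_{0}}-\aleph_{0}\times2^{\aleph_{0}}=2^{\aleph_{0}}\ne\emptyset$. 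I expect the main obstacle to be the middle step, specifically verifying rigorously that deleting one clopen interval per stage genuinely leaves a perfect subtree inside $X_{\omega}$: one must track the heights $n(\alpha)$ and $r(\alpha)$ carefully, handle the ``if $x_{\alpha}$ has already been deleted then do nothing'' clause, and confirm that density of $\langle x_{\alpha<\omega}\rangle$ really guarantees \emph{two} surviving extensions above cofinally many nodes of the residual tree rather than merely one (which would give only a single residual branch).
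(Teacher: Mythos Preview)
Your proposal is correct and follows essentially the same route as the paper: you establish $|X|=2^{\aleph_0}$ via the ``no isolated points implies arbitrarily high splitting, hence by sequential completeness an embedded copy of $2^{\omega}$'' argument, and you compute $|X_{\omega}|=2^{\aleph_0}$ by the same labelling device the paper uses (surviving $u_n(x_\alpha)$ labelled $1,2,\dots$, deleted ones labelled $0$, then invoke sequential completeness on the $0$-free $\omega$-sequences). Your added bookkeeping with Cantor--Schr\"oder--Bernstein and the explicit cardinal arithmetic for the deleted part are reasonable refinements, and your closing caveat about needing $r(\alpha)$ large enough to guarantee \emph{two} surviving extensions rather than one is exactly the point the paper flags in its footnote (where $r(\alpha)=1$ collapses $X_\omega$ to a single branch).
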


On the other hand, because there are only countably infinitely many
clopen intervals (since there are only countably infinitely many nodes
from which clopen intervals split from a branch), if we were to delete
$\aleph_{1}$ clopen intervals in a dense way\footnote{That is, there is no non-empty subset of $X$ that does not have a
clopen interval deleted from it. If there were some clopen interval
which were not subject to deletion, then the empty set would not result.} the empty set would result. This may seem meaningless, but we can
say that every $\omega_{1}$-sequence of Cantor sets, $C_{\alpha}$,
such that $C_{\beta}\subset C_{_{\gamma}}$ if $\beta>\gamma$, has
a terminal segment of empty sets, \emph{i.e}. $C_{\delta}=\emptyset$
for all $\delta>\beta$ for some countable ordinal $\beta$.\footnote{It is always possible to re-order any countably infinite set as a
total ordering of order type $\alpha$ for any $\alpha<\omega_{1}$,
but if there were a strictly decreasing nested sequence of Cantor
sets of length $\aleph_{1}$ then as at least one clopen interval
is deleted at each step, $X$ would have at least $\aleph_{1}$ clopen
intervals, which is false. Likewise a tree with a path of length $\omega_{1}$
with $\omega_{1}$ clopen intervals splitting from it would define
a set of clopen intervals of cardinality $\aleph_{1}$, which does
not exist; and thus no tree which has a path of length $\omega_{1}$
with $\omega_{1}$ clopen intervals splitting from it represents a
set of real numbers.} In cardinality terms we have $2^{\aleph_{0}}-\aleph_{1}\times2^{\aleph_{0}}=\slashed{O}$.
Moreover, although we can force $2^{\aleph_{0}}-\aleph_{0}\times2^{\aleph_{0}}=\slashed{O}$
(delete any $\omega$-sequence of all clopen intervals $\subset X$
from $X$), we cannot force $2^{\aleph_{0}}-\aleph_{1}\times2^{\aleph_{0}}\ne\slashed{O}$
as the deletion of any dense uncountable sequence of clopen intervals
will result in an empty remainder.\footnote{If the denseness condition were removed, it would be possible to delete
the same clopen interval $\aleph_{1}$ times, or rather delete it
once and then do nothing $\aleph_{1}$ times.} \\
\\
We can state this as:
\begin{thm}
\label{thm:In-a-sequentially}In a sequentially complete Baire topology
on a set of real numbers, X, that comprises a sequentially complete
set of binary sequences with a countable basis of clopen intervals
and no discrete or isolated points, if $\aleph_{1}$ clopen intervals
are deleted in a dense way, then the empty set results, i.e. $2^{\aleph_{0}}-\aleph_{1}\times2^{\aleph_{0}}=\slashed{O}$.
\end{thm}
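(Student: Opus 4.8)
The plan is to reduce the statement to two structural facts about $X$ once all isolated points have been removed: that $X$ is precisely the union of its clopen intervals, and that there are only countably many clopen intervals to begin with. By the results established above we already know that $\left|X\right|=2^{\aleph_{0}}$, which is what makes the left-hand side of the displayed cardinal equation meaningful; what remains is to see that a \emph{dense} deletion of $\aleph_{1}$ clopen intervals is forced, by sheer lack of supply, to exhaust all of them and hence to cover $X$.

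First I would record that, since every member of $X$ is a binary $\omega$-sequence, the tree representing $X$ has height $\omega$ and therefore has only countably many nodes; as each clopen interval $u_{n}(x)$ splits from a branch at exactly one such node, there are at most $\aleph_{0}$ distinct clopen intervals in $X$. (This is the same observation used above to build closed nowhere dense Cantor sets from only countably many deletions.) Next I would check that $X=\bigcup\{u_{n}(x):u_{n}(x)\text{ a clopen interval},\ n\ge1\}$. Fix $x\in X$; since all isolated points have been deleted, $x$ is not isolated, so some branch $z\in X$ splits from $x$ at a node of some height $n\ge1$, meaning $z_{m}=x_{m}$ for all $m<n$ while $z_{n}\ne x_{n}$. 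Then $u_{n}(z)$ is a nonempty clopen interval, and $x\in u_{n}(z)$ because $x$ agrees with $z$ below $n$ and differs from $z$ at $n$. Hence every point of $X$ lies in some clopen interval, so the removal of all clopen intervals from $X$ leaves $\emptyset$. (The degenerate case in which all members of $X$ cover a single $\omega$-sequence has already been disposed of, since there $X$ consists only of isolated points and is countable.)

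It then remains to combine the two facts with the meaning of ``in a dense way'': as the accompanying footnote explains, a deletion of clopen intervals leaves a nonempty remainder exactly when some clopen interval is never subject to deletion, so a dense deletion is one under which every clopen interval is deleted. Given an $\omega_{1}$-indexed family of clopen intervals, the pigeonhole principle together with the countability of the supply shows that at most $\aleph_{0}$ distinct clopen intervals occur among them; denseness then forces all of them to occur, and the remainder is $X$ minus the union of all clopen intervals, which is $\emptyset$ by the previous paragraph. Equivalently, reading the deletions as a nested family of Cantor sets $C_{\alpha}$ with $C_{\gamma}\subseteq C_{\beta}$ whenever $\gamma>\beta$: a strict drop $C_{\alpha+1}\subsetneq C_{\alpha}$ removes a clopen interval that can no longer be a subset of any later $C_{\delta}$ and so is never removed again, whence there can be at most $\aleph_{0}$ strict drops and the family is constant from some countable ordinal on; denseness rules out that constant value being nonempty. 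In the paper's cardinal-arithmetic notation this is exactly $2^{\aleph_{0}}-\aleph_{1}\times2^{\aleph_{0}}=\emptyset$.

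The one genuinely delicate point, I expect, is the interplay between the second fact and the precise force of ``dense'': one must be confident that after removing isolated points $X$ really is covered by its clopen intervals, so that nothing can survive ``between'' the deleted intervals, and that a dense deletion is strong enough to meet every clopen interval rather than merely to have a topologically dense union. Everything else is the pigeonhole principle applied to a countable supply.
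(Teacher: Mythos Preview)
Your proposal is correct and follows essentially the same route as the paper. The paper's argument, given in the paragraph preceding the theorem and in the attached footnotes, is exactly: there are only countably many clopen intervals (countably many nodes), so a strictly decreasing $\omega_{1}$-sequence of Cantor sets is impossible (each strict drop consumes a fresh clopen interval), and the denseness condition---glossed in the footnote as ``no non-empty subset of $X$ fails to have a clopen interval deleted from it''---forces the eventual constant value to be empty. Your write-up is in fact more careful than the paper's sketch: you make explicit the verification that $X$ is covered by its clopen intervals once isolated points are removed, and you flag the exact content of ``dense'' as the one point needing attention, which the paper handles only by footnote.
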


There is also a connection between this topology and the Baire Category
Theorem for compact\footnote{A topological space  $X$ is \emph{compact} if for every set of subsets
$M\subseteq N$ such that $\bigcup M=X$ there is a finite set of
subsets $L\subseteq M$ such that $\bigcup L=X$. A topological space
$X$ is locally compact if every $x\in X$ has some open set $U$
and some compact set $C$ such that $x\in U\subseteq C$, } Hausdorff\footnote{A topological space  is \emph{Hausdorff} if there are disjoint neighbourhoods
around any two distinct points, \emph{i.e. }$Hausdorff(\langle x,N\rangle):=(\forall x\in X)(\forall y\in X)(\exists Y\in N)(\exists Z\in N)(x\neq y\rightarrow Y\cap Z=\slashed{O})$.} topological spaces, \emph{i.e.} that a compact Hausdorff topological
space is not the union of countably many closed nowhere dense subsets.
A topological space that comprises a sequentially complete set of
binary sequences with countably infinitely many clopen basis sets
$u_{n}(x)$\footnote{A space that has a base of countably many open sets is called \emph{second-countable}.}
and no discrete or isolated points, $2^{\omega}$ for short\footnote{A standard Baire space, $\omega^{\omega}$, is sequentially complete,
but it is not compact nor sequentially compact, in essence because
it is too wide: it has unbounded sequences of branches and an infinite
cover comprising those branches and subtrees that split from them
that does not have a finite subcover. The notation $2^{\omega}$ reflects
the fact that the topological space is actually a Cantor space, \emph{i.e.}
$[0,1]$ with the Baire topology. A Cantor space is compact (as a
product of a compact set, namely $2=\{0,1\}$. }, is compact and Hausdorff.\footnote{\label{fn:To-show-compactness}To show compactness from first principles,
proceed using a Heine-Borel construction. Assume that a topological
space that comprises a sequentially complete set of binary sequences
with countably infinitely many clopen basis sets and no discrete or
isolated points, $X$, is not compact, \emph{i.e.} there exists an
infinite cover of open sets $\{C_{i<\alpha}:\alpha\ge\omega\}$ without
a finite open subcover. Then subdivide the set underlying of $X$,
$E$ say, into two disjoint clopen intervals $E_{1}$ and $E_{2}$
(which exists since $X$ has a clopen basis and the complement of
any clopen set is clopen) and iterate the process. At least one clopen
interval in each subdivision will not be compact. Because \emph{$E$}
is a sequentially complete and has a countably infinite basis, if
a nested sequence $E_{N}$ consists of closed non-empty sets, where
\emph{N} is an $\omega$-sequence of finite binary sequences such
that if $m\in N$ and $n>m$ and $n\in N$ then \emph{m} is a subsequence
of \emph{n}, the subdivision process $\bigcap_{n\in N}E_{N}$ will
result in a non-empty set. In fact, because $N$ defines a unique
point, $\bigcap_{n\in N}E_{N}$ contains exactly one point in $E,$$L$.
Now every point in $E$ will be a member of at least one open set,
$C_{j}$, in the cover, otherwise $E$ would not be covered. But $L\in E_{n}\subseteq C_{j}$
for some $n\in N$ where $E_{n}\in E_{N}$ since an open set $C_{j}$
such that $L\in C_{j}$ will include a clopen interval $E_{n}$ such
that $L\in E_{n}$ because the space has a basis of clopen intervals.
By construction any clopen interval will split from $E$ depending
only on its first $n$ binary digits for some natural number $n$.
Thus if an $E_{N}$ were a nested sequence of clopen intervals that
are not compact, then we would have $L\in E_{m}\subset E_{n}\subseteq C_{j}$
for all $m>n$ where $E_{m}\in E_{N}$ for any $E_{m}$ whose members
agree with members of $C_{j}$ on the first $n$ binary digits, which
means that $\{C_{j}\}$ is a single (\emph{i.e.} finite) cover for
$E_{m>n}$, contradiction. To show the space is Hausdorff, note that
any two distinct branches \emph{c} and \emph{d} will split from one
another at a certain node, $nd=\langle n,b\rangle$ where $b\in\{0,1\}$:
a $u_{n}(d_{\alpha})$ that includes \emph{c} and all branches that
split from \emph{c} after node \emph{n} will have a disjoint union
with a $u_{n}(d_{\alpha})$ that includes \emph{d} and all branches
that split from \emph{d} after node \emph{n}. Hence the space is Hausdorff.} In cardinality terms the Baire Category Theorem implies that $2^{\aleph_{0}}\ne\aleph_{0}\times2^{\aleph_{0}}$
given that each closed nowhere dense set in the compact Hausdorff
topological space $2^{\omega}$ has cardinality $2^{\aleph_{0}}$.\\
\\
We can state this as:
\begin{thm}
In a Hausdorff topological space, X, that comprises a sequentially
complete set of binary sequences with a countable clopen base and
no discrete or isolated points, X is not the union of countably many
nowhere dense subsets.
\end{thm}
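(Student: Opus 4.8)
The plan is to run a direct tree-theoretic version of the classical Baire Category argument, invoking the three hypotheses at three separate points: the countable clopen base lets the recursion stay inside basic sets, the absence of isolated or discrete points forces the recursion to climb the tree, and sequential completeness supplies the limit point. (One could instead simply cite the classical Baire Category Theorem, since the footnote to the preceding paragraph shows such an $X$ is compact Hausdorff, hence a Baire space; but the argument below keeps everything internal to the tree picture and to the choice apparatus already in play.)

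Suppose for contradiction that $X=\bigcup_{n<\omega}F_{n}$ with every $F_{n}$ nowhere dense. Replacing each $F_{n}$ by its closure (the closure of a nowhere dense set is again nowhere dense), I may assume every $F_{n}$ is closed, so that every $X\setminus F_{n}$ is dense and open. First I would build, by recursion on $n$, a descending chain $U_{0}\supseteq U_{1}\supseteq U_{2}\supseteq\cdots$ of nonempty clopen intervals with $U_{n}\cap F_{n}=\emptyset$, and with the height of the tree node $\nu_{n}$ defining $U_{n}$ strictly increasing in $n$. For $U_{0}$: the dense open set $X\setminus F_{0}$ contains a nonempty clopen interval because the clopen intervals form a base. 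Given $U_{n}$: the set $U_{n}\cap(X\setminus F_{n+1})$ is open and nonempty (a nonempty open set meeting a dense set), hence contains a nonempty clopen interval $V\subseteq U_{n}$ with $V\cap F_{n+1}=\emptyset$; and since $X$ has no isolated points, any nonempty clopen interval properly contains a clopen interval with a strictly higher defining node, so I replace $V$ by such a $U_{n+1}\subseteq V$. (Concretely: choose $y$ in the clopen interval; as $y$ is not isolated there is $z\in X$ splitting from $y$ strictly above the node defining the interval, and the branches of $X$ through the restriction of $y$ just past that split form a strictly smaller clopen interval with a strictly higher node.) Consequently the node heights $|\nu_{n}|$ tend to infinity.

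Now pass to the tree. Each $U_{n}$ is precisely the set of branches of $X$ through its node $\nu_{n}$, and $U_{n+1}\subseteq U_{n}$ forces $\nu_{n}$ to be an initial segment of $\nu_{n+1}$; since $|\nu_{n}|\to\infty$, the union $b:=\bigcup_{n<\omega}\nu_{n}$ is a genuine binary $\omega$-sequence, and each node $x$ of $b$ lies on some $\nu_{n}$, so $x$ occurs in some member of $X$, i.e. $x\in\in X$. By sequential completeness, $b\in X$. But $b$ passes through every $\nu_{n}$, hence $b\in U_{n}$, hence $b\notin F_{n}$, for every $n$; so $b\notin\bigcup_{n<\omega}F_{n}=X$, a contradiction. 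Therefore $X$ is not the union of countably many nowhere dense subsets.

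I expect the one genuinely delicate point to be the requirement that the node heights $|\nu_{n}|$ diverge: without it the ``limit'' $b$ need not be a full $\omega$-branch --- in a space with an isolated point the chain could stabilise at a clopen singleton --- and it is exactly the hypothesis of no isolated or discrete points that prevents this. A minor secondary nuisance is the non-uniqueness of binary names of reals (an eventually constant branch and its twin name the same point), but this affects only countably many branches, never obstructs the recursion, and in any case all isolated points have already been stripped, under which hypothesis further branches still split off every eventually constant branch arbitrarily high. The density and closure manipulations in the recursion are entirely routine.
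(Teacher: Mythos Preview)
Your proof is correct. The paper's own treatment of this particular theorem takes the route you mention parenthetically: the paragraph and footnote immediately preceding the statement establish that such an $X$ is compact and Hausdorff (via a Heine--Borel style argument), and the theorem is then just the classical Baire Category Theorem for compact Hausdorff spaces. Your direct tree-theoretic construction---a descending chain of basic clopen intervals avoiding the $F_n$, with node heights forced to infinity by the no-isolated-points hypothesis, and sequential completeness supplying the limit branch---is instead essentially the argument the paper gives later for its generalized Baire Category Theorem (the $\aleph$-sequentially complete case), where compactness fails for $\aleph>\aleph_0$ and the citation route is unavailable. So your approach is the one that actually generalizes, and it has the further merit of making explicit exactly which hypothesis drives which step; the paper's later proof controls node heights at limit stages by a cardinality argument rather than by invoking the no-isolated-points hypothesis as cleanly as you do.
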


It is also worth noting that we do not need to start with a sequentially
complete set $X$ with the Baire topology\footnote{If $X$ is not compact or sequentially complete, the Baire Category
Theorem does not apply.}. If $X$ is not sequentially complete, contains no sequentially complete
clopen interval, has a dense-in-itself subset and has cardinality
$\aleph_{0}<c\le2^{\aleph_{0}}$ such that all clopen sets have cardinality
$c$ (removing all clopen intervals of cardinality $<c$ if necessary),
then by removing clopen intervals in a dense way following Theorem
\ref{thm:In-a-sequentially} we see that $c-c\times\aleph_{1}=\slashed{O}$.
It is in fact possible using the Cantor construction $X_{1}=X$, $X_{\alpha+1}=cntr(X_{\alpha})$
for $\alpha<\omega$ and $X_{\omega}=\bigcap_{\alpha<\omega}X_{\alpha}$
to construct an $X_{\omega}$ which contains any given $x\in X$ by
choosing the set $\langle x_{\alpha<\omega}\rangle$ of $\omega$-sequences
to be deleted such that $x_{\alpha<\omega}\in X$, $x_{\alpha}\ne x$
and $\langle x_{n<\omega}\rangle$ is dense in $X$,\footnote{There are at least $\aleph_{0}$ such $\omega$-sequences because
if there were a finite number, then some clopen interval would be
sequentially complete. } and by modifying $cntr$ to increase the value of $r(\alpha)$ so
that $x\in X_{\alpha}$ for all $\alpha<\omega$ and therefore $x\in X_{\omega}$
by definition.\\
\\
 If we consider that each clopen interval is divided into $r>1$ disjoint
clopen intervals and one clopen interval is deleted, we can write
$U_{\alpha}=\bigcup_{0\le m\le r(\alpha)}U_{\alpha,m}$ and set $U_{\alpha+1}=U_{\alpha,m}$
for any choice of $m$ such that $1\le m\le r(\alpha)$, where $U_{\alpha,m}$
are clopen intervals, $U_{\alpha,0}$ is deleted because $x_{\alpha}$
is a branch in $U_{\alpha,0}$, $U_{1}=X$ and $U_{\omega}=\bigcap_{\alpha<\omega}U_{\alpha}$,
for natural numbers $\alpha,\:m,\:r(\alpha)$. Then if $U_{\alpha}$
preserves $y_{\alpha}\in U_{\alpha}$, \emph{i.e.} $y_{\alpha}\in U_{\omega}$,
then if $y_{\alpha}\in U_{\alpha,m}$ for some $m>0$ there are $y_{\alpha,s}\in U_{\alpha,s}\ne y_{\alpha}$
for all $1\le s\le r(\alpha)$ and $s\ne m$. We require that $U_{\alpha}$
is constructed to include a clopen interval around branch $x_{\alpha}\in U_{\alpha,0}$
(which will be deleted), to preserve $\bigcup_{1\le m<\alpha}\{y_{m}\}$
and $y_{\alpha}\in U_{\alpha,m}$ for some $m>0$. This requirement
can be met by selecting $y_{1}\in U_{1}$ such that $y_{1}\ne x_{\beta}$
for any $\beta<\omega$ and constructing $U_{\alpha+1}$ and $y_{\alpha+1}$
as follows given clopen interval $U_{\alpha}$ and $y_{\alpha}\in U_{\alpha}$
which is preserved, $i.e.$ $y_{\alpha}\in U_{\omega}$:\footnote{The rate of growth of $r(n)$ depends on the height of the splitting
node of $x_{n}$ and $y_{n}$, which could be set arbitrarily high. }\\
\\
\noindent\fbox{\begin{minipage}[t]{1\columnwidth - 2\fboxsep - 2\fboxrule}%
\begin{itemize}
\item If $x_{\alpha}$ is a branch in $U_{\alpha}$: set $r(\alpha)$ to
include the node where $y_{\alpha}$ splits from $x_{\alpha}$, set
$U_{\alpha,0}\subset U_{\alpha}$ to be a clopen interval such that
$x_{\alpha}$ is a branch in $U_{\alpha,0}$ and $y_{n}\notin U_{\alpha,0}$,
and set $U_{\alpha+1}:=U_{\alpha,m}$ for any choice of $1\le m\le r(\alpha)$
where $U_{\alpha}=\bigcup_{0\le m\le r(\alpha)}U_{\alpha,m}$. Set
$y_{\alpha+1}:=y_{\alpha}$ if $y_{\alpha}\in U_{\alpha+1}$ and otherwise
choose $y_{\alpha+1}\in U_{\alpha+1}$ such that $y_{\alpha+1}\ne y_{1\le m\le\alpha}$
(which is possible since each clopen interval such as $U_{\alpha+1}$
will have uncountably infinitely many members)\footnote{The same construction works for $U_{\alpha+1}$ having countably infinitely
many members as $\alpha+1<\omega$ }.\\
\item If $x_{\alpha}$ is not a branch in $U_{\alpha}$: set $U_{\alpha+1}:=U_{\alpha}$
and set $y_{\alpha+1}:=y_{\alpha}$.
\end{itemize}
\end{minipage}}\\
\\
\\
Since each $y_{\alpha+1}$ is preserved by the same construction as
was used for $y_{\alpha}$, we see that each splitting of a clopen
interval into $r>1$ clopen intervals preserves an additional $r-1$
points of $X$. It follows that it is possible to construct $X_{\omega}$
from $X$, by means of the closed nowhere dense set construction,
which contains a dense-in-itself subset of cardinality $\ge\aleph_{0}$.
That is, it is possible to force $c-c\times\aleph_{0}\ne\slashed{O}$.
\\
\\
\includegraphics[scale=0.4]{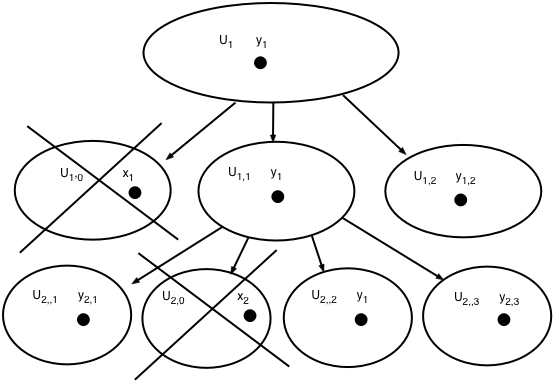}\\
\\
\emph{Figure 3: An example of how a descending sequence of clopen
intervals can be forced to contain one point of a set $X$ per node
of a decomposition of $X$ into clopen intervals.}\\
\\
We can state this as:
\begin{thm}
In a Baire topology of an uncountable set of real numbers, X, that
comprises a set of binary sequences with a countable clopen base and
no discrete or isolated points, it is always possible to construct
a Cantor set $X_{\omega}$ from $X$ which contains a dense-in-itself
subset of cardinality $\ge\aleph_{0}$. That is, it is possible to
force $c-c\times\aleph_{0}\ne\slashed{O}$. But deleting $\aleph_{1}$
clopen intervals in a dense way results in the empty set, i.e. $c-c\times\aleph_{1}=\slashed{O}$.
\end{thm}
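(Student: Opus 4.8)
For the final statement I would treat its two halves separately: the positive assertion (that $\omega$ densely-placed deletions can be arranged to leave an infinite dense-in-itself remainder, ``$c-c\times\aleph_{0}\ne\slashed{O}$'') is an explicit simultaneous construction, while the negative assertion (``$c-c\times\aleph_{1}=\slashed{O}$'') is the same counting argument that underlies Theorem \ref{thm:In-a-sequentially}, whose proof, on inspection, uses only the countability of the clopen base and not sequential completeness.

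For the positive half I would first record that the hypotheses make $X$ an infinite, dense-in-itself, second-countable, zero-dimensional metrizable space, so, with the metric $d$ of the text, every nonempty clopen interval is infinite and splits into arbitrarily small clopen subintervals, and every point is a limit of distinct points of $X$ inside any prescribed clopen neighbourhood of it. I would then run a single recursion of length $\omega$ producing two interleaved objects: a ``tree of convergent sequences'' of points $y_{s}\in X$ and clopen intervals $N_{s}\ni y_{s}$ indexed by $s\in\omega^{<\omega}$, with $N_{\emptyset}=X$, the $N_{s^{\frown}i}$ ($i<\omega$) pairwise disjoint nonempty clopen subintervals of $N_{s}$ none containing $y_{s}$, and $d(y_{s},y_{s^{\frown}i})\to 0$; and, at the $k$-th step, a clopen interval $I_{k}$ inside the $k$-th member $B_{k}$ of a fixed countable clopen base, chosen to avoid the finitely many $y_{s}$ committed so far, while every later $y_{s}$ and every later $N_{s}$ is required to avoid the finitely many $I_{j}$ already chosen. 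This is always possible because at a finite stage only finitely many points and clopen intervals are constrained and the relevant clopen intervals are infinite with small clopen subintervals. Putting $D=\{y_{s}:s\in\omega^{<\omega}\}$ and $X_{\omega}=X\setminus\bigcup_{k<\omega}I_{k}$, the set $D$ is countably infinite and dense-in-itself (each $y_{s}=\lim_{i}y_{s^{\frown}i}$ with $y_{s^{\frown}i}\in D\setminus\{y_{s}\}$) and $D\subseteq X_{\omega}$ by construction; since $\{B_{k}\}$ is a base, every nonempty clopen interval contains some $B_{k}$ and hence some $I_{k}$, so $\bigcup_{k}I_{k}$ is dense open and $X_{\omega}$ is closed and nowhere dense. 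Re-enumerating the $I_{k}$ in order type $\omega$ and taking any point of $I_{k}$ as the relevant $x_{k}$ exhibits $X_{\omega}$ as a Cantor set obtained from $X$ by deleting $\omega$ clopen intervals in a dense way, giving $c-c\times\aleph_{0}\ne\slashed{O}$.

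For the negative half I would take any chain $X_{0}=X\supseteq X_{1}\supseteq\cdots$ of length $\omega_{1}$ with $X_{\gamma+1}=X_{\gamma}\setminus I_{\gamma}$ for clopen intervals $I_{\gamma}$ and $X_{\lambda}=\bigcap_{\gamma<\lambda}X_{\gamma}$ at limits, produced ``in a dense way'' in the sense that $I_{\gamma}\cap X_{\gamma}\ne\slashed{O}$ whenever $X_{\gamma}\ne\slashed{O}$. The clopen intervals $u_{n}(x)$ of $X$ correspond to nodes of its representing binary tree, of which there are only $\aleph_{0}$ many, so $X$ has only countably many clopen intervals; and at two steps $\gamma<\gamma'$ that both strictly shrink the chain one must have $I_{\gamma}\ne I_{\gamma'}$, since $I_{\gamma'}=I_{\gamma}$ would force $I_{\gamma'}\cap X_{\gamma'}\subseteq I_{\gamma}\cap X_{\gamma+1}=\slashed{O}$. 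Hence the chain cannot strictly decrease at $\aleph_{1}$ steps and is eventually constant, say from some $\beta<\omega_{1}$ on; but $X$ has a clopen base and no isolated points, so every nonempty subset of $X$ meets some clopen interval, whence if $X_{\beta}\ne\slashed{O}$ the next step could still strictly decrease it, contradicting density. Therefore $X_{\beta}=\slashed{O}$, i.e. $c-c\times\aleph_{1}=\slashed{O}$; this is exactly Theorem \ref{thm:In-a-sequentially} with its sequential-completeness hypothesis removed.

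The step I expect to be the genuine obstacle is in the positive half: making the deleted family $\{I_{k}\}$ dense in $X$ while leaving the entire tree $D$ intact, which forces the recursion on the $y_{s},N_{s}$ and the recursion on the $I_{k}$ to be run together and kept mutually consistent. As indicated above, the verification that no conflict arises reduces to the single structural fact that in a dense-in-itself space with a clopen base every clopen interval is infinite and splits into arbitrarily small clopen subintervals, after which only routine finite-stage bookkeeping remains; everything in the negative half is then elementary cardinal counting.
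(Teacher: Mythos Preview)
Your argument for both halves is correct. The negative half is indeed the same counting argument the paper uses (and you are right that sequential completeness plays no role there).

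For the positive half your route differs from the paper's. The paper works \emph{inside} its $cntr$ machinery: it fixes a dense sequence $\langle x_{\alpha}\rangle$ to be deleted, runs a single nested chain of clopen intervals $U_{\alpha}$, and at each stage adjusts the offset $r(\alpha)$ so that the piece $U_{\alpha,0}$ containing $x_{\alpha}$ can be excised while a designated point $y_{\alpha}$ survives in one of the $r(\alpha)$ sibling pieces; the dense-in-itself conclusion is then drawn from the observation that each split into $r>1$ surviving pieces preserves $r-1$ additional points. You instead decouple the two tasks and build a genuine tree of witnesses $\{y_{s}:s\in\omega^{<\omega}\}$ together with shrinking clopen neighbourhoods $N_{s}$, interleaving this with a pass through a fixed clopen base to select the deleted intervals $I_{k}\subseteq B_{k}$. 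What your organization buys is that the dense-in-itself property of the surviving set is immediate from the tree structure ($y_{s}=\lim_{i}y_{s^{\frown}i}$), whereas in the paper this is left somewhat implicit in the phrase ``preserves an additional $r-1$ points''. What the paper's organization buys is that the resulting $X_{\omega}$ is visibly an instance of the $cntr$ construction with a specific dense $\langle x_{\alpha}\rangle$ and specific $r(\alpha)$, so it slots directly into the earlier theorems; your final sentence re-enumerating the $I_{k}$ and picking $x_{k}\in I_{k}$ bridges this gap, though strictly speaking matching the exact form of $cntr$ (a terminal segment of $x_{k}$ beyond a prescribed node) would require choosing each $I_{k}$ a little more carefully.
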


\section{A natural topology of sets of higher order}

The Baire topology can be defined in the case of higher order sets
in the same way as real numbers with the difference that clopen intervals,
$u_{\alpha}(x)$, can split from a branch at any ordinal $\alpha<\aleph$
rather than $\alpha<\omega$. The decision to allow splits that occur
at nodes of infinite height has the consequence that the Baire topology
on $X$ is not equivalent to a product topology, which is in turn
equivalent to allowing only clopen sets that split from a branch of
height $n<\omega$ in the case of a finite base for the product\footnote{In this case for a product topology a finite sequence of bounded finite
sets (\emph{i.e.} an initial finite $n$-ary sequence for some natural
number $n$) will define the topology.}. \\
\\
If $X$ has a dense-in-itself kernel\footnote{If $X$ does not have a dense-in-itself kernel then $\left|X\right|\le\aleph$.},
then it is possible to construct closed nowhere dense sets by removing
clopen intervals in the same manner as the case of sets of real numbers,
deleting a $\aleph$-sequence $S=\langle x_{\beta<\aleph}\in X\rangle$
that is dense in $X$ by means of the construction $cntr(Y;x_{\beta}):=Y-u_{\beta}(x_{\beta})$,
where $u_{\beta}(x_{\beta})=\{y:(y)_{n(\beta)+r(\beta)+1}\ne(x_{\beta})_{n(\beta)+r(\beta)+1}\wedge(\forall\gamma\le n(\beta)+r(\beta))[(y)_{\gamma}=(x_{\beta})_{\gamma}]\}$.
$n(\beta)$ is the supremum of nodes where $x_{\beta}$ splits from
$x_{\delta<\alpha}$ and the offset $r(\beta)>0$ is any ordinal $r(\beta)<\aleph$
(as in the case of the real numbers, skipping over empty nodes). It
follows that we can construct a sequence $X_{0}=X$, $X_{\delta+1}=cntr(X_{\delta};x_{\delta})$
for $\delta<\aleph$ and $X_{\lambda}=\bigcap_{\delta<\lambda}X_{\delta}$
for limit ordinal $\lambda\le\aleph$. We claim that $X_{\aleph}$
is a closed nowhere dense set, which follows because the construction
results in sets of the form $X-\bigcup_{\beta<\aleph}u_{\alpha(\beta)+r(\beta)}(x_{\beta})$,
$i.e.$ the complement of an open set, and any clopen interval will
have a clopen interval deleted from it (since the set of sequences
$\{S:S\in X_{\alpha}\}$ is dense in $X_{\alpha}$). Finally, we note
that if $X$ has a linear rather than tree representation in terms
of binary $\aleph$-sequences, just as in the case of the real numbers
we can remove isolated points by (transfinite) induction, starting
at the initial member of the linear order, and proceeding until all
members of $X$ have become isolated. In this case $X$ has cardinality
$\aleph$.\\
\\
We can state this as:
\begin{thm}
In the Baire topology of a set of binary $\aleph$-sequences, X, with
a basis of clopen intervals of cardinality $\aleph$ and no discrete
or isolated points, the Cantor sets $X_{\aleph}$ constructed from
$X$ and a dense $\aleph$-sequence $\langle x_{\beta<\aleph}\in X\rangle$
by $X_{0}=X$, $X_{\delta+1}=cntr(X_{\delta};x_{\delta})$ for $\delta<\aleph$
and $X_{\lambda}=\bigcap_{\delta<\lambda}X_{\delta}$ for limit ordinal
$\lambda\le\aleph$ are closed and nowhere dense. 
\end{thm}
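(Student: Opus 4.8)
The plan is to generalize the argument already given for real numbers (Theorem on the Baire topology on subsets of $\mathbb{R}$ and the subsequent discussion of Cantor sets) by replacing $\omega$ with an arbitrary infinite cardinal $\aleph$ throughout, and to check that the two essential properties used there --- that $X_\aleph$ is closed (being the complement of an open set) and that $X_\aleph$ is nowhere dense --- survive the passage to nodes of transfinite height. First I would verify that each $u_\alpha(x)$ is clopen in the Baire topology on a set of binary $\aleph$-sequences: that $u_\alpha(x)$ contains all its limit points (a limit of sequences agreeing with $x$ below $\alpha$ and disagreeing at $\alpha$ still has this property, since agreement/disagreement on a fixed node $\langle\alpha,b\rangle$ is determined coordinatewise) and likewise that its complement does; this is where allowing splits at limit ordinals must be handled with mild care, but the coordinatewise characterization makes it routine. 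Given that, the clopen intervals form a base of clopen sets, finite or $\aleph$-indexed unions behave as in the countable case, and in particular $\bigcup_{\beta<\aleph} u_{n(\beta)+r(\beta)}(x_\beta)$ is open, so $X_\aleph = X - \bigcup_{\beta<\aleph} u_{n(\beta)+r(\beta)}(x_\beta)$ is closed.

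Next I would establish well-definedness of the construction $X_0 = X$, $X_{\delta+1} = cntr(X_\delta; x_\delta)$, $X_\lambda = \bigcap_{\delta<\lambda} X_\delta$ for limit $\lambda \le \aleph$. Here the point to check is that $n(\beta)$, the supremum of heights at which $x_\beta$ splits from the previously deleted $x_{\delta<\beta}$, is a genuine ordinal $<\aleph$ (it is a supremum of $<\aleph$-many ordinals each $<\aleph$, hence $<\aleph$ since $\aleph$ is regular --- or, if one does not want to assume regularity, one restricts attention to the relevant cofinal behaviour), so that $u_{n(\beta)+r(\beta)+1}(x_\beta)$ makes sense and actually splits $x_\beta$ off from the branches previously deleted. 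One also notes, exactly as in the real case, that $X_\delta \ne \emptyset$ at every stage $\delta < \aleph$ because the density of $\langle x_{\beta<\aleph}\rangle$ in $X$ (hence of its tail in each $X_\delta$) guarantees that each non-empty clopen interval retained at stage $\delta$ still contains some $x_{\gamma}$ with $\gamma > \delta$, so nothing collapses prematurely; but since the statement only asserts closedness and nowhere density, even the degenerate possibility $X_\aleph = \emptyset$ is harmless, as $\emptyset$ is trivially closed and nowhere dense.

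Finally, for nowhere density I would argue as in the real case: every clopen interval $u_\alpha(x)$ is the unique maximal dense subset of itself within the tree it spans, and by construction the set $\{S : S \in X_\delta\}$ of retained branches is dense in $X_\delta$ at each stage, so every clopen interval meeting $X_\aleph$ has had some clopen sub-interval deleted from it along the way; hence no clopen interval --- and therefore no non-empty open set, since the clopen intervals form a base --- is contained in the closure of $X_\aleph$, i.e. $X_\aleph$ has empty interior in its closure, which (it being closed) is just $X_\aleph$ having empty interior. The separate linear (non-tree) case is dismissed exactly as in the real-number section: if $X$ has a linear representation then by transfinite induction all points become isolated after deleting isolated points stagewise, so $|X| = \aleph$ and $X$ has no dense-in-itself kernel, placing it outside the hypotheses. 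The main obstacle I anticipate is purely the bookkeeping around transfinite splitting heights --- making sure that ``the supremum of the nodes where $x_\beta$ splits from earlier $x_\delta$'' is legitimately below $\aleph$ and that ``skipping over empty nodes'' to define the offset $r(\beta)$ is well-founded --- rather than anything genuinely new beyond the countable argument; the topological facts (clopenness of $u_\alpha(x)$, base structure, the complement-of-open characterization) transfer almost verbatim once the coordinatewise description of the topology is in hand.
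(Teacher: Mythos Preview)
Your proposal is correct and follows essentially the same approach as the paper: the paper's argument (given in the paragraph preceding the theorem statement) shows closedness by writing $X_\aleph = X - \bigcup_{\beta<\aleph} u_{n(\beta)+r(\beta)}(x_\beta)$ as the complement of an open set, and nowhere density by observing that every clopen interval has a clopen sub-interval deleted from it because $\{S : S \in X_\alpha\}$ is dense in $X_\alpha$, with the linear case handled separately. You are in fact more careful than the paper on the bookkeeping point that $n(\beta) < \aleph$ requires $\aleph$ to be regular (or an appeal to cofinality); the paper invokes this kind of reasoning elsewhere (e.g.\ in the proof of the Generalized Baire Category Theorem, via the Axiom of Choice) but does not flag it explicitly here.
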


In the same way as in the case of the real numbers it is possible
to force $\left|X_{\aleph}\right|\ge\aleph$ by applying the closed
nowhere dense set construction to $X$, which has a dense-in-itself
subset and has cardinality $\aleph<c\le2^{\aleph}$ such that all
clopen sets have cardinality $c$ (removing all clopen intervals of
cardinality $<c$ if necessary), to construct an $X_{\aleph}$ which
contains any given $x\in X$ by choosing the set $\langle x_{\alpha<\aleph}\rangle$
of $\aleph$-sequences to be deleted to be such that $x_{\alpha<\aleph}\in X,$
$x_{\alpha}\ne x$ and $\langle x_{\alpha<\aleph}\rangle$ is dense
in $X$,\footnote{There are at least $\aleph$ such $\aleph$-sequences because if there
were $<\aleph$, then some clopen interval will be $\aleph-$sequentially
complete. } and by modifying $cntr$ to increase the value of $r(\beta)$ so
that $x\in X_{\alpha}$ for all $\alpha<\aleph$ and therefore $x\in X_{\aleph}$
by definition. \\
\\
If we consider that each clopen interval is divided into $r>1$ disjoint
clopen intervals and one clopen interval is deleted, we can write
$U_{\alpha}=\bigcup_{0\le\beta\le r(\alpha)}U_{\alpha,\beta}$ and
$U_{\alpha+1}=U_{\alpha,\beta}$ for any choice of $\beta$ such that
$1\le\beta\le r(\alpha)$, where $U_{\alpha,\beta}$ are clopen intervals,
$U_{\alpha,0}$ is deleted because $x_{\alpha}$ is a branch in $U_{\alpha,0}$,
$U_{1}=X$ and $U_{\lambda}=\bigcap_{\beta<\lambda}U_{\beta}$, for
ordinal numbers $\alpha,\:\beta,\:r(\alpha)<\aleph$ and $\lambda$
a limit ordinal. Then if $U_{\alpha}$ preserves $y_{\alpha}\in U_{\alpha}$,
\emph{i.e.} $y_{\alpha}\in U_{\aleph}$, then if $y_{\alpha}\in U_{\alpha,\beta}$
for some ordinal number $\beta>0$ there are $y_{\alpha,\gamma}\in U_{\alpha,\gamma}\ne y_{\alpha}$
for all $1\le\gamma\le r(\alpha)$ and $\gamma\ne\beta$. We require
that $U_{\alpha}$ is constructed to include a clopen set around branch
$x_{\alpha}$ in $U_{\alpha,0}$ (which will be deleted), to preserve
$\bigcup_{\gamma<\alpha}\{y_{\gamma}\}$ and $y_{\alpha}\in U_{\alpha,\beta}$
for some $\beta>0$. This requirement can be met by selecting $y_{1}\in U_{1}$
such that $y_{1}\ne x_{\alpha<\aleph}$ and by constructing $U_{\alpha+1}$
and $y_{\alpha+1}$ and $U_{\lambda}$ and $y_{\lambda}$ for limit
ordinal $\lambda<\aleph$ as follows given clopen interval $U_{\alpha}$
and $y_{\alpha}\in U_{\alpha}$ which is preserved, $i.e.$ $y_{\alpha}\in U_{\aleph}$,
or clopen intervals $U_{\beta<\lambda}$ and $y_{\beta}\in U_{\beta}$
in the case of limit ordinal $\lambda$. \\
\\
\noindent\fbox{\begin{minipage}[t]{1\columnwidth - 2\fboxsep - 2\fboxrule}%
\begin{itemize}
\item If $\alpha<\aleph$ is a successor ordinal:\\
\\
If $x_{\alpha}$ is a branch in $U_{\alpha}$: set $r(\alpha)$ to
include the node where $y_{\alpha}$ splits from $x_{\alpha}$, set
$U_{\alpha,,0}\subset U_{\alpha}$ to be a clopen interval such that
$x_{\alpha}$ is a branch in $U_{\alpha,0}$ and $y_{\alpha}\notin U_{\alpha,0}$,
and set $U_{\alpha+1}:=U_{\alpha,\beta}$ for any choice of $1\le\beta\le r(\alpha)$
where $U_{\alpha}=\bigcup_{0\le\beta\le r(\alpha)}U_{n,m}$. Set $y_{\alpha+1}:=y_{\alpha}$
if $y_{\alpha}\in U_{\alpha+1}$ and otherwise choose $y_{\alpha+1}\in U_{\alpha+1}$
such that $y_{\alpha+1}\ne y_{1\le\beta\le\alpha}$ (which is possible
as there are at least $c>\aleph$ members in any clopen interval such
as $U_{\alpha+1}$)\footnote{The construction also works for $c=\aleph$ as $\alpha+1<\aleph$}.\\
\\
If $x_{\alpha}$ is a not branch in $U_{\alpha}$: set $U_{\alpha+1}:=U_{\alpha}$
and set $y_{\alpha+1}:=y_{\alpha}$.\\
\item If $\alpha<\aleph$ is a limit ordinal:\\
\\
Set $U_{\alpha}:=\bigcap_{1\le\beta<\alpha}U_{\beta}$. \\
\\
By transfinite induction $U_{\alpha}$ preserves at least one $y\in U_{\alpha}$
such that $y\in U_{\beta}$ is preserved for all $\beta$ such that
$1\le\beta<\alpha$. But for every such $y$ there is a least ordinal
$\alpha\le\gamma<\aleph$ such that there are no deletions from any
subtrees that split from $y$ at or above the $\gamma$-th node of
$y$ (as the cardinality of the union of $<\aleph$ ordinals $<\aleph$
is $<\aleph$ using the Axiom of Choice). For ease of reference, the
least ordinal $\gamma$ is written as $h(\alpha,y)$. It follows that
$U_{\alpha}$ contains non-empty clopen intervals, $V_{\alpha}:=U_{h(\alpha,y)}$
for all $y\in U_{\alpha}$ . \\
\\
Set $y_{\alpha}:=y$ for any choice of $y\in X$ such that $y\in U_{\beta}$
for all $\beta<\alpha$. There is always at least one such $y$ because
$y\in V_{\alpha}\subseteq U_{\alpha}$. \\
\item If $\alpha=\aleph$:\\
\\
Set $U_{\alpha}:=\bigcap_{1\le\beta<\alpha}U_{\beta}$. Then each
set $U_{h(\aleph,y)}=\{y\}$ and $y_{\aleph}:=y$ for all $y\in U_{\aleph}$.
\\
\end{itemize}
\end{minipage}} \\
\\
 \\
Since each $y_{\alpha}$ can be preserved by the same construction
as was used for $y_{\beta<\alpha}$, we see that each splitting of
a clopen interval into $r>1$ clopen intervals for $r<\aleph$ preserves
an additional $r-1$ points of $X$, and all of these points are preserved
at limit ordinals (as represented by all possible values of $U_{\lambda}$
for limit ordinals $\lambda\le\aleph$). It follows that every $X_{\aleph}$
generated from $X$ by the closed nowhere dense set construction contains
a dense-in-itself subset of cardinality $\ge\aleph$. It follows that
if $\aleph<\left|X\right|\le2^{\aleph}$ then it is possible to force
$\left|X\right|-\aleph\times\left|X\right|\ne\emptyset$, while if
a dense sequence of $\aleph+1$ clopen intervals are deleted then
$\left|X\right|-(\aleph+1)\times\left|X\right|=\emptyset$. \\
\\
\includegraphics[scale=0.4]{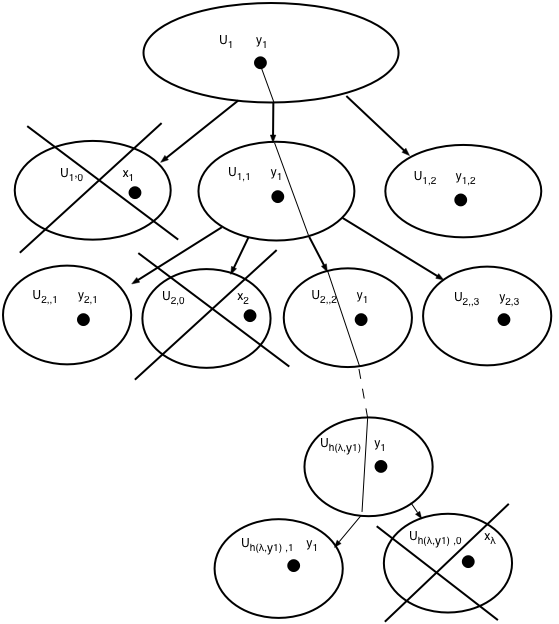}\\
\\
\emph{Figure 4: An example of how a descending sequence of clopen
intervals has clopen intervals from some limit ordinal onwards, at
the point where no clopen intervals have yet been deleted in the construction.}\\
\\
We can state this as:
\begin{thm}
\label{thm:Baire}In a Baire topology of a set of binary $\aleph$-sequences,
X, such that $\aleph<\left|X\right|\le2^{\aleph}$ with a basis of
clopen intervals of cardinality $\aleph$ and no discrete or isolated
points, it is always possible to construct a Cantor set $X_{\aleph}$
from $X$, which contains a dense-in-itself subset of cardinality
$\ge\aleph$. That is, if $\aleph<\left|X\right|\le2^{\aleph}$ then
it is possible to force $\left|X\right|-\aleph\times\left|X\right|\ne\emptyset$,
while if a dense sequence of $\aleph+1$ clopen intervals are deleted
then $\left|X\right|-(\aleph+1)\times\left|X\right|=\emptyset$. 
\end{thm}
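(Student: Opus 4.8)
The plan is to prove the two assertions separately: the ``it is possible to force $\left|X\right|-\aleph\times\left|X\right|\ne\emptyset$'' direction by carrying out the explicit transfinite recursion displayed in the box above and checking that it never breaks down, and the ``$\aleph+1$ collapses to $\emptyset$'' direction by the cardinality-counting argument already used for Theorem~\ref{thm:In-a-sequentially}. Throughout I work with the dense-in-itself set $X$ itself (the hypothesis already removes isolated and discrete points, so $X$ equals its own Cantor--Bendixson kernel), and I first reduce to the case in which every non-empty clopen interval has one fixed cardinality $c$ with $\aleph<c\le2^{\aleph}$: discard every clopen interval of cardinality below the relevant value, noting this does not create isolated points, does not destroy density-in-itself, and leaves a non-empty set since $\left|X\right|>\aleph$.

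For the first assertion I would run the recursion $U_{1}=X$, $U_{\alpha+1}=U_{\alpha,\beta}$ for a surviving piece $U_{\alpha,\beta}$, $U_{\lambda}=\bigcap_{1\le\beta<\lambda}U_{\beta}$ against a fixed dense $\aleph$-sequence $\langle x_{\alpha}:\alpha<\aleph\rangle$ of branches to be deleted, exactly as in the boxed construction, while simultaneously maintaining the preserved point $y_{\alpha}\in U_{\alpha}$. Three verifications are needed. (i) At a successor stage where $x_{\alpha}$ is a branch of $U_{\alpha}$, choose $r(\alpha)<\aleph$ large enough that $y_{\alpha}$ and $x_{\alpha}$ have already split, so the deleted piece $U_{\alpha,0}$ containing $x_{\alpha}$ can be taken to miss $y_{\alpha}$, and there remain $r(\alpha)-1\ge1$ sibling pieces, each a non-empty clopen interval of cardinality $c$, from which $y_{\alpha+1}$ is chosen (possible since $c>\aleph>\alpha$). (ii) The key limit lemma: for limit $\lambda<\aleph$ the set $U_{\lambda}=\bigcap_{1\le\beta<\lambda}U_{\beta}$ still contains a non-empty clopen interval, hence is non-empty and the recursion continues --- here one uses that a union of $<\aleph$ many ordinals each $<\aleph$ has supremum $<\aleph$, so the splitting nodes used below stage $\lambda$ have bounded height $h(\lambda,y)<\aleph$ and $U_{h(\lambda,y)}$ is a genuine non-empty clopen interval inside $U_{\lambda}$. (iii) At stage $\aleph$ the intersection $\bigcap_{1\le\beta<\aleph}U_{\beta}$ is reached and, along each thread of choices, the surviving branch is a single point $y_{\aleph}\in X_{\aleph}$.

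Granting (i)--(iii), the first assertion follows by counting. At each successor stage where a split actually occurs the construction fixes $r(\alpha)-1\ge1$ further points of $X$ lying in sibling clopen intervals $U_{\alpha,\gamma}$; these are then carried through all remaining stages by the same recursion and survive to $X_{\aleph}$, staying non-empty at limits by (ii). Since $\langle x_{\alpha}\rangle$ is dense in $X$, splits occur cofinally inside every non-empty clopen interval, so (a) there are at least $\aleph$ surviving points, and (b) every clopen neighbourhood of a surviving point contains another surviving point, i.e. the set of surviving points is dense-in-itself; this gives the dense-in-itself subset of $X_{\aleph}$ of cardinality $\ge\aleph$, i.e. $\left|X\right|-\aleph\times\left|X\right|\ne\emptyset$ can be forced. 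For the second assertion, a dense deletion sequence cannot be extended to a strictly decreasing $\aleph^{+}$-chain of Cantor sets, because each non-trivial step deletes at least one clopen interval while the basis, and hence the set of clopen intervals, has cardinality only $\aleph$ (only $\aleph$ splitting nodes); equivalently, a construction tree with a path along which $\aleph+1$ clopen intervals split would exhibit $\aleph^{+}$ distinct clopen intervals, which do not exist. Thus after at most $\aleph$ effective steps no non-empty clopen interval can survive a further deletion, yet density forces every non-empty clopen interval to have something deleted from it, so the only stable value is $\emptyset$, giving $\left|X\right|-(\aleph+1)\times\left|X\right|=\emptyset$.

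I expect the main obstacle to be verification (ii), the limit lemma: making precise that each $U_{\lambda}$ is not merely non-empty but contains a non-empty \emph{clopen interval}, since this is what allows the same recursion, and the preservation of the ``extra'' points, to be applied uniformly at every limit stage up to and including $\aleph$. This rests on the boundedness of the splitting heights below $\aleph$, which is where regularity-type behaviour of $\aleph$ is tacitly used; a careful treatment should either restrict to regular $\aleph$ or derive the height bound directly from the density and $\aleph$-sized-basis hypotheses.
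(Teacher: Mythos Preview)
Your proposal is correct and follows essentially the same approach as the paper: the paper's proof \emph{is} the boxed transfinite recursion together with the observation that the basis has only $\aleph$ clopen intervals, and your verifications (i)--(iii) and the cardinality-counting argument for the $\aleph+1$ case match the paper's reasoning step for step. Your flag on verification (ii) is well taken: the paper's justification that ``the cardinality of the union of $<\aleph$ ordinals $<\aleph$ is $<\aleph$ using the Axiom of Choice'' is exactly the regularity of $\aleph$, and the paper does not address the singular case, so your caution there is if anything more careful than the original.
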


If $X$ is $\aleph$-sequentially complete and therefore has a base
of clopen intervals which are $\aleph$-sequentially complete, \emph{i.e.}
all paths of length $\aleph$ through the interval are members of
the interval, then we can claim that it is possible to force $2^{\aleph}-\aleph\times2^{\aleph}=2^{\aleph}$
because the same labelling technique can be used on clopen intervals
as in the case of the real numbers (all deleted clopen intervals being
labelled 0) and we can note that all $\aleph$-sequences of ordinal
labels $\aleph>\alpha>0$ are members of $X$ by $\aleph$-sequential
completeness and that the cardinality of $\aleph^{\aleph}=2^{\aleph}$.
By transfinite induction for $\alpha<\aleph$ with the hypothesis
that all clopen intervals $\subseteq X_{\alpha}$ have cardinality
$2^{\aleph}$, at stage $\alpha+1$ $X_{\alpha}$ will be split into
$>1$ and $<\aleph$ clopen intervals with a label $\ne$0, each of
which by the induction hypothesis has cardinality $2^{\aleph}$ ,
so $X_{\alpha+1}$ as the union of these sets, will also have cardinality
$2^{\aleph}.$ For a limit ordinal $\lambda$, all clopen intervals
with label $0$ can be deleted, and for the clopen intervals remaining
$\aleph$-sequential completeness can be applied to the paths between
labels formed at stages successor stages $\alpha<\lambda$ to show
that $X_{\lambda}$ has cardinality $2^{\aleph}$. The latter observation
relies on the fact that a strictly descending $\aleph$-sequence of
non-empty clopen intervals defines a single point or branch $x\in X$,
and therefore a descending $\alpha<\aleph$-sequence of clopen intervals
can be identified with an initial segment of $x$ of length $\alpha$.\\
\\
We can state this as:
\begin{thm}
In a $\aleph$-sequentially complete Baire topology of a set of binary
$\aleph$-sequences, X, with a basis of clopen intervals of cardinality
$\aleph$ and no discrete or isolated points, the Cantor sets have
cardinality $2^{\aleph}$ and the process of deleting $\aleph$ clopen
intervals gives rise to the equation $2^{\aleph}-\aleph\times2^{\aleph}=2^{\aleph}$.
\end{thm}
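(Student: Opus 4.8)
The plan is to mimic, in the higher-cardinal setting, the argument already given for the real-number case (the theorem yielding $2^{\aleph_0}-\aleph_0\times 2^{\aleph_0}=2^{\aleph_0}$), using $\aleph$-sequential completeness to replace the compactness-style closure argument that worked for $\omega$-sequences. First I would fix a dense $\aleph$-sequence $\langle x_{\beta<\aleph}\in X\rangle$ and run the Cantor-set construction $X_0=X$, $X_{\delta+1}=cntr(X_{\delta};x_{\delta})$, $X_{\lambda}=\bigcap_{\delta<\lambda}X_{\delta}$, arranging that at each successor stage the current clopen interval is split into $>1$ and $<\aleph$ many disjoint clopen intervals, exactly one (the one containing the branch $x_{\delta}$) being deleted and assigned label $0$, the surviving ones receiving distinct labels among the ordinals $0<\alpha<\aleph$. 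The key bookkeeping device is that a strictly descending $\aleph$-sequence of non-empty clopen intervals pins down a unique branch $x\in X$, so a descending $\alpha$-sequence of clopen intervals for $\alpha<\aleph$ corresponds to an initial segment of length $\alpha$ of such a branch; this is the identification I would state explicitly up front, since everything else hangs off it.

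Next I would carry out the cardinality count by transfinite induction on $\alpha\le\aleph$ with the hypothesis that every clopen interval $\subseteq X_{\alpha}$ has cardinality $2^{\aleph}$. The base case is the hypothesis on $X$ (or rather on $X_{\aleph}$ once we restrict to clopen intervals of full cardinality $2^{\aleph}$, discarding smaller ones as in Theorem~\ref{thm:Baire}). For the successor step, $X_{\alpha}$ is partitioned into fewer than $\aleph$ clopen intervals, all but one surviving; each surviving piece has cardinality $2^{\aleph}$ by the induction hypothesis, and a union of $<\aleph$ many sets of size $2^{\aleph}$ still has size $2^{\aleph}$ (since $\aleph\cdot 2^{\aleph}=2^{\aleph}$), so $X_{\alpha+1}$ has cardinality $2^{\aleph}$. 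For a limit $\lambda$, I would delete all $0$-labelled branches and observe that the surviving labels, read along the descending sequence of clopen intervals produced at the successor stages $\alpha<\lambda$, form sequences of ordinals in $(0,\aleph)$ of length $<\lambda\le\aleph$; by $\aleph$-sequential completeness every such sequence — extended arbitrarily to length $\aleph$ — determines a branch that lies in $X$, hence in $X_{\lambda}$. Counting these labellings gives at least $|\{1,\dots,\alpha,\dots\}^{<\aleph}|$, and pushing the construction all the way to $\alpha=\aleph$ gives $|\aleph^{\aleph}|=2^{\aleph}$ many distinct branches, so $X_{\aleph}$ has cardinality $2^{\aleph}$. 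Writing this in cardinal arithmetic yields $2^{\aleph}-\aleph\times 2^{\aleph}=2^{\aleph}$, and since we may also simply delete an $\aleph$-sequence of \emph{all} clopen intervals to get $\emptyset$, the word ``force'' is justified: deleting the right $\aleph$ clopen intervals leaves a set of size $2^{\aleph}$.

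The main obstacle I anticipate is the limit-stage argument, specifically making precise that $\aleph$-sequential completeness actually delivers membership in $X_{\lambda}$ rather than merely in $X$. One has to check that the branch determined by a given sequence of surviving labels is not accidentally killed by one of the deletions performed at some earlier successor stage $\delta<\lambda$ — i.e. that the label sequence correctly tracks ``has not fallen into any $U_{\delta,0}$''. This is where the explicit preservation construction in the boxed display of the preceding section is needed: it guarantees that the $y_{\alpha}$'s survive all deletions, and that at a limit ordinal $\lambda$ there is a least $h(\lambda,y)<\aleph$ beyond which no further deletions touch the subtree above $y$, so that $U_{\lambda}$ genuinely contains non-empty clopen intervals. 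I would lean on that construction (and on $|\bigcup$ of $<\aleph$ ordinals $<\aleph| <\aleph$, which uses the Axiom of Choice) to close the limit step cleanly, and then the successor and final-stage counts are routine cardinal arithmetic.
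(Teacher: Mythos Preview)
Your proposal is correct and follows essentially the same route as the paper: the labelling of surviving clopen pieces by nonzero ordinals $<\aleph$, the transfinite induction on $\alpha\le\aleph$ with hypothesis that all clopen intervals inside $X_\alpha$ have size $2^{\aleph}$, the use of $\aleph$-sequential completeness at limit stages via the identification of a strictly descending $\aleph$-sequence of clopen intervals with a single branch, and the final count $|\aleph^{\aleph}|=2^{\aleph}$ are all exactly what the paper does. If anything you are more scrupulous than the paper about the limit-stage worry (that the branch determined by a label sequence lands in $X_\lambda$ and not merely in $X$); the paper handles this in one sentence by appealing to $\aleph$-sequential completeness applied to the paths between labels, whereas you propose to back it with the explicit preservation machinery from the boxed construction preceding Theorem~\ref{thm:Baire}.
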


The Baire Category Theorem can also be generalized to the statement
that in a $\aleph$-sequentially complete Hausdorff topological space,
$X$, that comprises a $\aleph$-sequentially complete set of binary
$\aleph$-sequences with a clopen base of cardinality $\aleph$ and
with no discrete or isolated points, $X$ is not the union of $<\aleph+1$-many
nowhere dense subsets for $\aleph\ge\aleph_{0}$.\footnote{See for example \cite{key-4} Proposition 3.8 p. 213 for the case
$\aleph=\aleph_{0}$.} It is worth noting that a $\aleph$-sequentially complete Hausdorff
space $X$ that comprises a $\aleph$-sequentially complete set of
binary $\aleph$-sequences with  a clopen base of cardinality $\aleph$
is neither compact \footnote{\label{fn:Construct-a-cover}Construct a cover $Z$ of $X$ as follows.
Fix a branch $x\in X$ and add to $Z$ all disjoint clopen intervals
that split from $x$. Then add to $Z$ a clopen interval that splits
from $y\in X$ such that $y\ne x$ at a node of index $>\aleph_{0}$
. $Z$ has no finite open subcover if the base of $X$ has cardinality
$\aleph>\aleph_{0}$ because there are at least $\aleph_{0}$ disjoint
clopen intervals in the cover such that removal of any one such set
would not result in a cover of $X$. A corollary is that there is
a descending $\aleph$-sequence of clopen sets $\langle x_{\alpha<\aleph}\rangle$
(complements of clopen intervals) such that all finite intersections
of $X_{\alpha}$ are non-empty while $\bigcap_{\beta<\aleph}X_{\beta}=\emptyset$.
The failure of compactness means that the topological space cannot
be characterized by convergent ultrafilters, but it possible nevertheless
to characterize $X$ by the set of strictly descending $\aleph$-sequences
of clopen intervals converging to a point $x$, and in fact a generalized
local compactness condition does hold for any $\aleph$-sequentially
complete Hausdorff topological space that has a clopen base of cardinality
$\aleph$: if for every $\beta<\aleph$ $\bigcap_{\alpha<\beta}F_{\alpha}\ne\emptyset$
then $\bigcap_{\alpha<\aleph}F_{\alpha}\ne\emptyset$ for any strictly
descending $\aleph-$sequence of non-empty clopen intervals $F_{\alpha}$,
\emph{i.e.} $F_{\beta}\subset F_{\gamma}$ if ordinal $\gamma<\beta$.
This follows by following the branch from which successive nested
clopen intervals split.} nor metrizable \footnote{\label{fn:By-the-Nagana-Smirnov}By the Nagata-Smirnov metrization
theorem $2^{\aleph}$ for $\aleph>\aleph_{0}$ is not metrizable as
it is Hausdorff and regular (since any two points can be separated
by clopen neighbourhood), but does not have a countable locally finite
base (since there are uncountably many clopen intervals and if every
member of $2^{\aleph}$ is only a member of finitely many clopen intervals,
the family of clopen intervals in the base is uncountable). } for $\aleph>\aleph_{0}$, but there is a generalized metric function
that can be used.\\
\\
In \cite{key-2} R. Kopperman showed that it possible to replace the
set of real numbers in the definition of a metric space with a commutative
semi-group\footnote{A semi-group is defined like a group but may lack an inverse operation
to the group operation.}, and for every topology to find a suitable commutative semi-group
for which a metric can be introduced to the topological space (which
may not be symmetric or separate distinct members of the topological
space). Let $\langle2^{\aleph},\oplus\rangle$ be a structure defined
as follows. If $2^{\aleph}$ is the set of functions $\aleph\rightarrow2$
and $a,\,b\in2^{\aleph}$, \emph{i.e.} are binary $\aleph$-sequences,
then treat $a$ and $b$ as $\aleph$-sequences of real numbers in
the range $[0,\infty)$, $\langle a_{1},...,a_{\alpha<\alpha},\ldots\rangle$
and $\langle b_{1},...,b_{\alpha<\alpha},\ldots\rangle$ for real
numbers $a_{\alpha},b_{\alpha}\in[0,\infty)$, and define $a\oplus b$
as the $\aleph$-sequence $\langle a_{1}+b_{1},...,a_{\alpha<\alpha}+b_{\alpha<\alpha},\ldots\rangle$.
\\
\\
Let us denote a clopen interval comprising binary $\beth$-sequences
from $a\le b$ to $b$ by $([a,b])[[\beth]$ and the half-open interval
from $a<b$ to $<b$ by $[a,b)[\aleph]$. Let us now define $d(x,y)$
for binary $\aleph$-sequences of the form $\langle x,...,x_{\alpha<\alpha},\ldots\rangle$
where real number $x_{\alpha}\in([0,1])[\omega]$, by $d(x,x):=0$
and $d(x,y):=1_{\alpha(x,y)}$, \emph{i.e.} where there is a 1 only
in the $\alpha$-th digit of an $\aleph$-sequence of binary $\omega$-sequences
with a binary point (a real number) and 0 for all other digits, and
\emph{$\alpha$} is a successor ordinal that is the height of the
lowest node where $(x)_{\alpha}\ne(y)_{\alpha}$. This is an unambiguous
definition because each $x_{\alpha}\in([0,1])$ can be represented
as a real number with 0 in front of the binary point (because $1.000\ldots$
can also be written $0.111\ldots).$ We can thus skip the $0.$ before
the binary point in the real number representation uniquely identifying
the height of the lowest node where $(x)_{\alpha}\ne(y)_{\alpha}$.
In practice we will leave the binary point in place for clarity. Surprisingly
we have $d(x,y)\le\frac{1}{2}=\langle0.1,0,0,0,\ldots\rangle$ because
the first node after $0.$ is the first node that $x$ and $y$ can
differ. On the other hand $d(x,y)\oplus d(y,z)\le1$, and a sum of
natural number $n$ such distances is bounded by $n/2$. \\
\\
We can show that if $x$ and $y$ are binary $\aleph$-sequences in
the clopen interval $([0,1])[\aleph]$ then $\langle([0,1])[\aleph],d\rangle$
forms a metric space.\footnote{In fact $([0,1])[\aleph]$ is also an ultrametric space as $max(d(x,y),d(y,z))\ge d(x,z)$,
see Figure 5. } We have $d(x,y)=d(y,x)$, $d(x,y)\ge0$ and $d(x,y)=0\rightarrow x=y$
immediately from the definition of $d$ and the fact that all real
numbers in $x$ and $y$ start with $0.$ We also have $d(x,y)\oplus d(y,z)\ge d(x,z)$
because:
\begin{enumerate}
\item If $\alpha(x,z)>\alpha(x,y)$ then $\alpha(y,z)=\alpha(x,y)$, and
$d(x,y)\oplus d(y,z)=1_{\alpha(x,y)}+1_{\alpha(y,z)}=1_{\alpha(x,y)-1}>1_{\alpha(x,y)}>1_{\alpha(x,z)}$.
\item If $\alpha(x,z)<\alpha(x,y)$ then $\alpha(y,z)=\alpha(x,z)$, and
$d(x,y)\oplus d(y,z)=1_{\alpha(x,y)}+1_{\alpha(y,z)}>1_{\alpha(x,z)}$. 
\item If $\alpha(x,z)=\alpha(x,y)$ then we have $d(x,y)\oplus d(y,z)=1_{\alpha(x,y)}+1_{\alpha(y,z)}>1_{\alpha(x,z)}$. 
\item If $x=y$ then $\alpha(x,z)=\alpha(y,z)$, $d(x,y)=0$ and $d(x,y)\oplus d(y,z)=0+1_{\alpha(y,z)}=1_{\alpha(x,z)}$;
if $y=z$ then $\alpha(x,y)=\alpha(x,z)$, $d(y,z)=0$ and $d(x,y)\oplus d(y,z)=1_{\alpha(x,y)}+0=1_{\alpha(x,z)}$;
and if $x=z$ then $\alpha(x,y)=\alpha(y,z)$, $d(x,z)=0$ and $d(x,y)\oplus d(y,z)=1_{\alpha(x,y)}+1_{\alpha(y,z)}>0$. 
\end{enumerate}
We can define clopen intervals in the Baire topology as $\{y:d(x,y)=1_{\alpha(x,y)}\}$.
\\
\\
\includegraphics[scale=0.7]{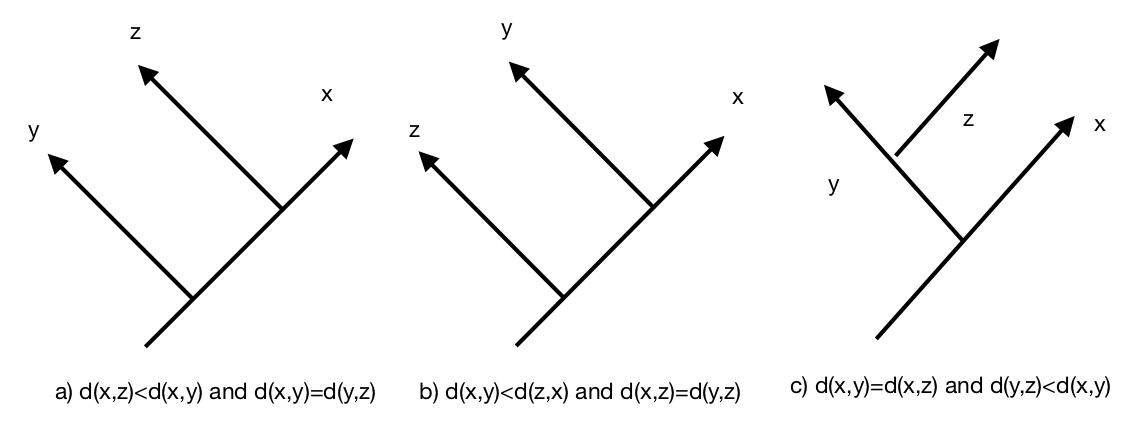}\\
\\
\emph{Figure 5: Diagrams showing the different cases in the generalized
metric of $([0,1])[\aleph]$.}\\
\\
The clopen interval $([0,1])[\aleph]$ was chosen for simplicity,
and it has the closure property $d(x,y)\oplus d(y,z)\in([0,1])[\aleph]$
if $x,y,z\in([0,1])[\aleph]$; but exactly the same generalized metric
works on the interval $[0,\infty)[\aleph]$. Any binary real number
can be padded with 0s in front of the binary point if necessary to
have a prefix of the same length as any other binary real number,
and all binary digits in the prefix are treated as negative whole
number offsets from the binary points. For example, to calculate the
$d(x,y)$ where $x=11.000\ldots$ and $y=100.000\ldots$ the prefix
of $x$ can be padded to $011$ and $d(x,y)=100.000\ldots$, which
is at position -3 with respect to the binary point. It is true that
$d(x,y)\oplus d(y,z)\in[0,\infty)[\aleph]$ if $x,y,z\in[0,\infty)[\aleph]$,
but of course $[0,\infty)[\aleph]$ is not closed under upward limits,
$i.e.$ $d(x,y)\rightarrow\infty$ if $x$ is fixed and $y\rightarrow\infty$
or \emph{vice versa}, and $\infty\notin[0,\infty)$. The clopen interval
$([0,1]))[\aleph]$ is therefore a better representation of the set
of all binary $\aleph$-sequences. \\
\\
 We should be clear that for $\aleph>\aleph_{0}$ the generalized
metric space is not compact. The reason is that, as we have seen,
it is possible to have an $\omega$-sequence of non-empty clopen and
totally bounded intervals $\langle X_{\alpha<\omega}\rangle$ such
that $X_{\alpha}\subseteq X_{\beta}$ if $\beta\leq\alpha<\omega$
and $\bigcap_{\beta<\omega}X_{\beta}=\emptyset$ . But the following
statements are true in a generalized metric space. If $y$ is a limit
point of non-empty $\bigcap_{\alpha<\aleph}X_{\beta}$ where $\langle X_{\alpha<\aleph}\rangle$
is an $\aleph$-sequence of non-empty clopen intervals such that $X_{\beta}\subset X_{\gamma}$
if $\gamma<\beta$, then $y\in\bigcap_{\alpha<\aleph}X_{\beta}$.\footnote{If $\bigcap_{\beta<\aleph}X_{\beta}\ne\emptyset$ and $y\in\overline{\bigcap_{\beta<\aleph}X_{\beta}}-\bigcap_{\beta<\aleph}X_{\beta}$
then $y\notin X_{\alpha}$ for some $\alpha<\aleph$, and since $X_{\alpha}$
is clopen, $y\notin\overline{X_{\alpha}}$ and hence $y\notin\bigcap_{\beta<\aleph}\overline{X_{\beta}}$.
Since $\overline{\bigcap_{\beta<\aleph}X_{\beta}}\subseteq\bigcap_{\beta<\aleph}\overline{X_{\beta}}$
by definition of closure, it follows that $y\notin\overline{\bigcap_{\beta<\aleph}X_{\beta}}$,
contradiction. } Moreover, as noted in Footnote \ref{fn:Construct-a-cover} and which
can be seen from from the proof of the generalized Baire Category
Theorem below, in a $\aleph$-sequentially complete Hausdorff space
every strictly descending nested $\aleph$-sequence of non-empty clopen
intervals converges to exactly one point. Furthermore, the compactness
condition can be replaced by a generalized compactness condition in
a $\aleph$-sequentially complete Hausdorff topological space called
\emph{$<\aleph$-compactness:} if for every $\beta<\aleph$ $\bigcap_{\alpha<\beta}X_{\alpha}\ne\emptyset$
then $\bigcap_{\alpha<\aleph}X_{\alpha}\ne\emptyset$ for any strictly
descending $\aleph$-sequence of non-empty clopen intervals $\langle X_{\alpha<\aleph}\rangle$,
\emph{i.e.} $X_{\beta}\subset X_{\gamma}$ if ordinal $\gamma<\beta$.
It is therefore true that a $\aleph$-sequentially complete $\langle2^{\aleph},\oplus\rangle$-generalized
metric space is a $<\aleph$-compact topological space (\emph{i.e}.
a topological space such that each closed set satisfies the $<\aleph$-compactness
condition). \\
\\
The proof of the generalized Baire Category Theorem proceeds as follows
(broadly following \cite{key-4} 3.83Ac 213 for the case $\aleph=\aleph_{0}$).
Let us suppose for contradiction that $X=\bigcup_{\alpha<\aleph}C_{\alpha}$
for closed nowhere dense sets $C_{\alpha}$. We claim we can find
a $\aleph$-sequence of descending non-empty closed sets $\langle D_{\alpha}:\alpha<\aleph\rangle$
such that $D_{0}\subseteq X$, $D_{\beta}\subseteq D_{\alpha}$ if
$\alpha<\beta$ and $C_{\alpha}\cap D_{\alpha+1}=\slashed{O}$. This
is possible because for every non-empty open set $O$, $O-C_{\alpha}$
is a non-empty open set as $O$ has a non-empty interior and $C_{\alpha}$
has an empty interior. We choose $D_{0}\subseteq X$ to be a clopen
interval (since the space has a clopen base), $D_{\alpha+1}\subseteq D_{\alpha}-C_{\alpha}$
to be a clopen interval (as a clopen subset of the non-empty interior
of $D_{\alpha}$) and $D_{\lambda}:=\bigcap_{\alpha<\lambda}D_{\alpha}$
for limit ordinals $\lambda$. We can see that $D_{\alpha<\aleph}\ne\slashed{O}$
because at limit ordinals, $\lambda$, the node from which the clopen
interval $D_{\lambda}$ splits from some branch $x\in X$\footnote{In fact the $\aleph-$sequence of initial segments from which nested
clopen intervals split defines a branch that is in $D_{\aleph}$.} has an ordinal which is the limit of an $\alpha$-sequence for $\alpha<\aleph$
of ordinals $<\aleph$ (because the branches are of length $\aleph$),
and thus the node has an ordinal $<\aleph$ (by the Axiom of Choice).
As $D_{\alpha<\aleph}$ can be viewed as a clopen interval splitting
from a branch, and that clopen interval is then split at some branch
in the interval at higher ordinals, it follows by $\aleph$-sequential
completeness that $D_{\aleph}$ can be identified with a set containing
an $\aleph$-sequence of $2^{\aleph}$, \emph{i.e.} a set containing
a single point. Using this observation we have $\bigcap_{\alpha<\aleph}D_{\alpha}=\{x\}$
for $x\in2^{\aleph}$, and since $D_{0}\subseteq X$, $x\in X.$ However,
as $C_{\alpha}\cap D_{\alpha+1}=\slashed{O}$, $x\notin\bigcup_{\alpha<\aleph}C_{\alpha}$.
Hence $X\ne\bigcup_{\alpha<\aleph}C_{\alpha}$, as was to be proved.\\
\\
We can state these results as:
\begin{thm}
(Generalized Baire Category Theorem) In a $\aleph$-sequentially complete
Hausdorff topological space, X, that comprises a $\aleph$-sequentially
complete set of binary $\aleph$-sequences with a clopen base of cardinality
$\aleph$ and with no discrete or isolated points, X is not the union
of $<\aleph+1$-many nowhere dense subsets for $\aleph\ge\aleph_{0}$. 
\end{thm}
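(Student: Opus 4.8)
The plan is to mimic the classical Baire Category argument, replacing ``countably many closed nowhere dense sets'' with an $\aleph$-sequence of them and using $\aleph$-sequential completeness in place of the usual completeness-plus-countability. Suppose toward a contradiction that $X=\bigcup_{\alpha<\aleph}C_{\alpha}$ with each $C_{\alpha}$ closed and nowhere dense. I would build by transfinite recursion a strictly descending $\aleph$-sequence of non-empty clopen intervals $\langle D_{\alpha}:\alpha<\aleph\rangle$ with $D_{0}\subseteq X$, $D_{\beta}\subseteq D_{\alpha}$ for $\alpha<\beta$, and $D_{\alpha+1}\cap C_{\alpha}=\emptyset$. The successor step is the easy classical move: since $C_{\alpha}$ is nowhere dense, the non-empty open set $\mathrm{int}(D_{\alpha})$ is not contained in $\overline{C_{\alpha}}=C_{\alpha}$, so $D_{\alpha}\setminus C_{\alpha}$ has non-empty interior, and because the base consists of clopen intervals I can pick a clopen interval $D_{\alpha+1}\subseteq D_{\alpha}\setminus C_{\alpha}$. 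At limit ordinals I set $D_{\lambda}=\bigcap_{\alpha<\lambda}D_{\alpha}$.

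The delicate point — and the one I expect to be the main obstacle — is showing that $D_{\lambda}\neq\emptyset$ at limit stages and, ultimately, that $\bigcap_{\alpha<\aleph}D_{\alpha}\neq\emptyset$. Here is where the hypotheses really pull their weight. Each $D_{\alpha}$ is a clopen interval, hence is determined by a finite-or-transfinite initial segment of some branch; the nested sequence $\langle D_{\alpha}\rangle$ corresponds to an increasing sequence of such initial segments, and since every branch has length $\aleph$ and $\aleph$ is regular-enough (the union of $<\aleph$ ordinals $<\aleph$ has cardinality $<\aleph$, by the Axiom of Choice as the excerpt notes), the supremum of the splitting-node heights at any limit stage $\lambda<\aleph$ is still an ordinal $<\aleph$. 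Thus the initial segments amalgamate into a longer initial segment, still of length $<\aleph$, which picks out a non-empty clopen interval $D_{\lambda}$. This is essentially the $<\aleph$-compactness / generalized local compactness property recorded earlier: a strictly descending $\aleph$-sequence of non-empty clopen intervals whose proper initial segments all have non-empty intersection has non-empty intersection.

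Running this through all of $\alpha<\aleph$, the union of all the splitting initial segments is an $\aleph$-sequence, i.e.\ a full branch $x\in2^{\aleph}$, and by $\aleph$-sequential completeness $x$ lies in $X$ (indeed $\bigcap_{\alpha<\aleph}D_{\alpha}=\{x\}$, since a strictly descending $\aleph$-sequence of non-empty clopen intervals pins down a single point). Now $x\in D_{0}\subseteq X$, so $x\in C_{\alpha}$ for some $\alpha<\aleph$ by hypothesis; but $x\in D_{\alpha+1}$ and $D_{\alpha+1}\cap C_{\alpha}=\emptyset$, a contradiction. Hence $X$ is not the union of $\aleph$-many — a fortiori not of $<\aleph+1$-many — nowhere dense subsets. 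I would take care in two places: first, that the recursion actually produces a \emph{strictly} descending sequence (which is automatic once one observes that deleting a nowhere dense set forces a proper shrinkage of the clopen interval, since the space has no isolated points so every clopen interval splits arbitrarily high); and second, that ``nowhere dense'' may be used in the form $\overline{C_{\alpha}}$ has empty interior rather than $C_{\alpha}$ closed — but since the $C_{\alpha}$ are assumed closed these coincide, so no extra care is needed there.
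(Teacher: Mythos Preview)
Your proposal is correct and follows essentially the same route as the paper's own proof: assume $X=\bigcup_{\alpha<\aleph}C_{\alpha}$, recursively build a descending $\aleph$-sequence of non-empty clopen intervals $D_{\alpha}$ with $D_{\alpha+1}\subseteq D_{\alpha}\setminus C_{\alpha}$ and $D_{\lambda}=\bigcap_{\alpha<\lambda}D_{\alpha}$, use the fact that the splitting-node heights stay below $\aleph$ at limit stages (via the Axiom of Choice) to keep $D_{\lambda}$ non-empty, and then invoke $\aleph$-sequential completeness to extract a point $x\in\bigcap_{\alpha<\aleph}D_{\alpha}\subseteq X$ that avoids every $C_{\alpha}$. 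Your identification of the limit-stage non-emptiness as the crux, and your appeal to the amalgamation of initial segments of length $<\aleph$, match the paper's argument almost verbatim.
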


\begin{thm}
\label{thm:A--sequentially-complete}A $\aleph$-sequentially complete
Hausdorff topological space that comprises a $\aleph$-sequentially
complete set of binary $\aleph$-sequences is not compact and not
metrizable for $\aleph>\aleph_{0}$, but it is possible to use a generalized
metric and every strictly descending $\aleph$-sequence of non-empty
clopen intervals converges to exactly one point.
\end{thm}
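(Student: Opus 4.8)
The plan is to unpack the statement into its four constituent claims --- non-compactness, non-metrizability, the availability of a compatible generalized metric, and the convergence of strictly descending $\aleph$-chains of clopen intervals --- and to dispatch them in that order, the first three mostly consolidating material already in the text and in Footnotes~\ref{fn:Construct-a-cover} and~\ref{fn:By-the-Nagana-Smirnov}, the fourth being the genuinely new point and developed in full. The book-keeping device used throughout is the observation that a clopen interval $u_{\alpha}(x)$ is precisely the set of branches extending the string consisting of $x|_{\alpha}$ (the initial segment of $x$ of length $\alpha$) followed by $1-x_{\alpha}$, a string of length $\alpha+1$; hence, as the hypotheses already assert, the clopen intervals form a base of clopen sets, a strictly descending chain of clopen intervals is the same data as a properly $\subseteq$-increasing chain of binary strings, and $N_{\alpha}(x):=[\,x|_{\alpha+1}\,]$ runs through a neighbourhood base at $x$ with $\bigcap_{\alpha<\aleph}N_{\alpha}(x)=\{x\}$.

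\emph{Non-compactness, and the generalized metric.} For non-compactness I would take the cover of Footnote~\ref{fn:Construct-a-cover}: fix a branch $x\in X$ and let $Z$ comprise all $u_{\alpha}(x)$ for $\alpha<\aleph$ together with one clopen interval $u\ni x$ splitting from some $y\ne x$ at a node of index $\gamma>\aleph_{0}$. Since $X$ has no isolated points every $u_{\alpha}(x)$ is non-empty; they are pairwise disjoint and every branch $\ne x$ lies in exactly one of them, so $Z$ covers $X$; and for each $\alpha\le\gamma$ any branch in $u_{\alpha}(x)$ lies in no other member of $Z$, so at least $\aleph_{0}$-many members of $Z$ are indispensable and $Z$ has no finite subcover. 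For the generalized metric I would point to the construction already given: with $\langle 2^{\aleph},\oplus\rangle$ the commutative semigroup of coordinatewise-added $\aleph$-sequences of nonnegative reals and $d(x,x):=0$, $d(x,y):=1_{\alpha(x,y)}$, the verifications $d(x,y)=d(y,x)$, $d(x,y)=0\Rightarrow x=y$ and the four-case bound $d(x,z)\le d(x,y)\oplus d(y,z)$ exhibit $d$ as a Kopperman $\langle 2^{\aleph},\oplus\rangle$-valued metric (\cite{key-2}); and since $\{z:d(x,z)=1_{\alpha(x,z)}\}$ is precisely a clopen interval, the $d$-balls are the clopen base, so $d$ generates the Baire topology.

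\emph{Non-metrizability.} I would argue via the Nagata--Smirnov theorem as in Footnote~\ref{fn:By-the-Nagana-Smirnov}. $X$ is Hausdorff and, being zero-dimensional, regular, so it is metrizable only if it has a $\sigma$-locally-finite base $\mathcal{B}=\bigcup_{n<\omega}\mathcal{B}_{n}$. Fix a branch $x$: a base must contain, for each $\alpha<\aleph$, some $B_{\alpha}$ with $x\in B_{\alpha}\subseteq N_{\alpha}(x)$; since $x$ is not isolated and $\bigcap_{\alpha<\aleph}N_{\alpha}(x)=\{x\}$, the $B_{\alpha}$ cannot take only finitely many values, so infinitely many distinct members of $\mathcal{B}$ --- indeed, when $\aleph$ is regular, $\aleph$-many --- contain $x$, whence some $\mathcal{B}_{n}$ contains infinitely many sets through $x$ and is not locally finite there. (Equivalently, for regular $\aleph$ the neighbourhood base $\langle N_{\alpha}(x)\rangle$ has order type $\aleph$ with no strictly shorter cofinal subfamily, so $X$ is not first-countable, whereas metrizable spaces are.) This is the step I expect to be the main obstacle: the argument is clean when $\aleph$ is regular, but for singular $\aleph$ a point can be a $G_{\delta}$ and one then needs either a finer Nagata--Smirnov count or to restrict to regular $\aleph$, which is in any case the case relevant to the GCH applications to follow.

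\emph{Convergence of strictly descending $\aleph$-chains.} Given such a chain $\langle F_{\alpha}:\alpha<\aleph\rangle$ ($F_{\beta}\subsetneq F_{\gamma}$ for $\gamma<\beta$) of non-empty clopen intervals, write $F_{\alpha}=[\,s_{\alpha}\,]$. Strict descent forces $s_{\gamma}\subsetneq s_{\beta}$ for $\gamma<\beta$, so the lengths of the $s_{\alpha}$ strictly increase in $\alpha$, hence the length of $s_{\alpha}$ is $\ge\alpha$ for each $\alpha$, so those lengths are cofinal in $\aleph$ and the coherent union $s:=\bigcup_{\alpha<\aleph}s_{\alpha}$ is a function with domain $\sup_{\alpha}\mathrm{length}(s_{\alpha})=\aleph$, i.e.\ a binary $\aleph$-sequence. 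Each initial segment of $s$ is an initial segment of some $s_{\alpha}$, and $s_{\alpha}$ is a node of the tree of $X$ because $[\,s_{\alpha}\,]=F_{\alpha}\ne\emptyset$; so $s$ is a path through that tree and, by $\aleph$-sequential completeness, $s\in X$. Plainly $s\in[\,s_{\alpha}\,]=F_{\alpha}$ for all $\alpha$, and if $t\in\bigcap_{\alpha<\aleph}F_{\alpha}$ then $s_{\alpha}\subseteq t$ for all $\alpha$, so $s\subseteq t$, and as both have domain $\aleph$, $t=s$; thus $\bigcap_{\alpha<\aleph}F_{\alpha}=\{s\}$. Finally, any neighbourhood $W$ of $s$ contains some $N_{\delta}(s)=[\,s|_{\delta+1}\,]$, and taking $\alpha_{0}$ with $\mathrm{length}(s_{\alpha_{0}})>\delta$ (possible by cofinality) gives $s|_{\delta+1}\subseteq s_{\alpha_{0}}$, hence $F_{\alpha}\subseteq F_{\alpha_{0}}=[\,s_{\alpha_{0}}\,]\subseteq N_{\delta}(s)\subseteq W$ for all $\alpha\ge\alpha_{0}$: the chain converges to $s$, and the limit is unique both because $\bigcap_{\alpha<\aleph}F_{\alpha}$ is a singleton and because $X$ is Hausdorff. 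The only set-theoretic input here is the triviality that a strictly increasing $\aleph$-indexed sequence of ordinals has supremum $\aleph$; unlike the limit-stage bookkeeping in the Generalized Baire Category Theorem above, no use of the Axiom of Choice is needed.
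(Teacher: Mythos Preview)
Your proposal is correct and follows essentially the same route as the paper: non-compactness via the cover of Footnote~\ref{fn:Construct-a-cover}, non-metrizability via Nagata--Smirnov as in Footnote~\ref{fn:By-the-Nagana-Smirnov}, the generalized metric $d(x,y)=1_{\alpha(x,y)}$ already constructed in the text, and convergence by ``following the branch'' determined by the nested intervals together with $\aleph$-sequential completeness. Your convergence argument is a fully written-out version of what the paper only sketches (in Footnote~\ref{fn:Construct-a-cover} and in the proof of the Generalized Baire Category Theorem), and you rightly flag the singular-$\aleph$ lacuna in the metrizability step, which the paper's footnote passes over.
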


\section{A Modal Model of Set Theory}

We have seen from Theorem 10 that in a $\aleph$-sequentially complete
Hausdorff topological space, $X$, that has a clopen base of cardinality
$\aleph$ and with no discrete or isolated points, \emph{X} is not
the union of $<\aleph+1$-many nowhere dense subsets for $\aleph\ge\aleph_{0}$.
But is it the case that $X$ is the union of $\aleph+1$ closed nowhere
dense sets if the cardinality of $X>\aleph$? The answer is that this
result is possible because it can be forced if the $\aleph+1$ closed
nowhere dense sets are dense in $X$, but the forcing is quite natural.
\cite{key-3} provides a clear explanation of set theoretic forcing.
It will be seen that the result is independent of Zermelo Fraenkel
set theory with the Axiom of Choice (ZFC). The result can be seen
by means of the following construction.\\
\\
If we represent members of a $\aleph$-sequentially complete Hausdorff
topological space, $X$, as $<\aleph+1$-sequences, we can define
$X(\langle x_{1},\ldots,x_{\alpha<\aleph+1}\rangle;\langle y_{1},\ldots,y_{\beta<\aleph+1}\rangle)$
as the generalized Cantor (\emph{i.e.} closed nowhere dense) set that
results from the construction in Theorem 8 that preserves members
of $\langle x_{1},\ldots,x_{\alpha<\aleph+1}\rangle$ and deletes
members of $\langle y_{1},\ldots,y_{\beta<\aleph+1}\rangle$, where
each $x_{\gamma\le\alpha},\:y_{\gamma\le\beta}\in X$ and $x_{\gamma}\ne y_{\delta}$
for all $\gamma\le\alpha,\:\delta\le\beta$. Note that the choice
of $x_{\gamma}$ depends on $y_{\delta\le\gamma}$. Consider a $\aleph+1$-sequence
$X_{\alpha<\aleph+1}(\langle x_{1},\ldots,x_{\alpha}\rangle;\langle y_{1},\ldots,y_{\alpha}\rangle)$
of $\aleph$-sequentially complete closed nowhere dense sets, which
is possible because it is always possible to cover a branch of $\aleph$
nodes with disjoint sets of branches with $\aleph$ members. Then
we can see that:\\
\\
\noindent\fbox{\begin{minipage}[t]{1\columnwidth - 2\fboxsep - 2\fboxrule}%
$\bigcup_{\alpha<\aleph+1}X_{\alpha}(\langle x_{1},\ldots,x_{\alpha}\rangle;\langle y_{1},\ldots,y_{\aleph}\rangle)\cup\bigcup_{\alpha<\aleph+1}X_{\alpha<\aleph+1}(\langle y_{1},\ldots,y_{\alpha}\rangle;\langle x_{1},\ldots,x_{\aleph}\rangle)$%
\end{minipage}} \textcompwordmark\\
is dense in $X$ (because $\langle x_{1},\ldots,x_{\aleph}\rangle\cup\langle y_{1},\ldots,y_{\aleph}\rangle$
is dense in $X$) and it is possible for $\langle x_{1},\ldots,x_{<\aleph+1}\rangle$
and $\langle y_{1},\ldots,y_{<\aleph+1}\rangle$ to each have $\aleph+1$
members if the cardinality of $X>\aleph$. While it is not in general
true that the union of $\aleph+1$ closed nowhere dense sets, $X_{\alpha<\aleph+1}$,
that are dense in $X$ is $X$ (because a $\aleph$-sequence may exist
which is covered by $\aleph$-sequences that is in $X$ but is not
in the union), it is true in a natural model of $X.$ That model is
a transitive outer model model of cardinality $\aleph+1$ (see for
example \cite{key-3} in the case of countable transitive outer models),
in which the forcing partially ordered functions are $f:\aleph+1\rightarrow\{Y:Y\subseteq X\}$,
where $f_{\beta}=X_{\beta}$, where as above each $X_{\beta}$ is
a function of $\langle x_{1},\ldots,x_{\beta}\rangle$, $\langle y_{1},\ldots,y_{\beta}\rangle$
and of the function $r:\beta\rightarrow\beta$ used to control the
preservation of $\langle x_{1},\ldots,x_{\alpha}\rangle$ and the
deletion of $\langle y_{1},\ldots,y_{\alpha}\rangle$. Now in the
following keep $r$ fixed. To see that $f$ defines a partial ordering,
note that $f_{\gamma}\subseteq f_{\alpha}$ for $\aleph+1>\alpha>\gamma$
since $X_{\gamma}\subseteq X_{\alpha}$ for $\langle x_{1},\ldots,x_{\alpha}\rangle$
extending $\langle x_{1},\ldots,x_{\gamma}\rangle$\footnote{The $\aleph+1$-sequence of sets $\langle X_{\alpha<\aleph+1}\rangle$
must be eventually constant $\subset X$ in $<\aleph+1$ steps; otherwise
the preservation of $\aleph+1$ members of $X$ will result in X.}.\\
\linebreak{}
$F=\bigcup_{\beta<\aleph+1}f_{\beta}$ is a function because if: 
\begin{align*}
F(\langle x_{1},\ldots,,x_{<\aleph+1}\rangle;\langle y_{1},\ldots,y_{\aleph}\rangle)\ne F(\langle w_{1},\ldots,w_{<\aleph+1} & \rangle;\langle z_{1},\ldots,z_{\aleph}\rangle)
\end{align*}
then it follows that: 
\begin{align*}
f_{\beta}(\langle x_{1},\ldots,x_{\gamma<\beta}\rangle;\langle y_{1},\ldots,y_{\aleph}\rangle)\ne f_{\beta}(\langle w,\ldots,w_{\gamma<\beta}\rangle;\langle z_{1},\ldots,z_{\aleph}\rangle)
\end{align*}
for some $\beta<\aleph+1$ by definition of union, and there is a
correspondence (possibly many to one) between $\langle x_{1},\ldots,x_{\beta}\rangle$
and $X_{\beta}$. This implies that:
\begin{align*}
\langle x_{1},\ldots,x_{\gamma<\beta};y_{1},\ldots,y_{\aleph}\rangle\ne\langle w_{1},\ldots,w_{\gamma<\beta};z_{1},\ldots,z_{\aleph}\rangle
\end{align*}
since $f_{\beta}$ is a function and hence: 
\begin{align*}
\langle x_{1},\ldots,x_{\gamma<\aleph+1};y_{1},\ldots,y_{\aleph}\rangle\ne\langle w_{1},\ldots,w_{\gamma<\aleph+1};z_{1},\ldots,z_{\aleph}\rangle.
\end{align*}
\\
$F$ is onto $X-\bigcup_{\beta<\aleph}\{y_{\beta}\}$ because if $x\ne y_{\beta<\aleph}$
and $x\notin ran(F)$ for $x\in X$ then for some $\gamma<\aleph+1$
we can add $x$ to be preserved by $X_{\gamma}$ and all $X_{\alpha>\gamma}$
for $\alpha<\aleph+1$. Since the same argument works for $\langle y_{1},\ldots,y_{\aleph}\rangle;\langle x_{1},\ldots,x_{\alpha<\aleph+1}\rangle$
with $G=\bigcup_{\beta<\aleph+1}g_{\alpha}$, showing $G$ is onto
$X-\bigcup_{\beta<\aleph}\{x_{\alpha}\}$, we see that $F\cup G$
is a function onto $X$. This model is natural because it is completely
described by binary $\aleph+1$-sequences that can be instantiated
and that control membership of the closed nowhere dense sets. \\
\\
To see that this result is independent of ZFC, we note that the function
$F\cup G$ is a function from a set of cardinality $\aleph+1$ onto
a set of cardinality $2^{\aleph}$ (\emph{i.e. }from\emph{ }$\aleph+1$
onto $X$). Hence $\aleph+1\ge2^{\aleph}.$ Since $2^{\aleph}\ge\aleph+1$
by Cantor's theorem, GCH follows. Conversely if GCH is true, any union
of closed nowhere dense sets, such as $\{x\}$ for $x\in X,$ has
cardinality $\aleph+1=2^{\aleph}$ , and hence $X$ is the union of
$\aleph+1$ closed nowhere dense sets.\\
\\
We may state this result as:
\begin{thm}
(Not provable in ZFC, equivalent to GCH) In a $\aleph$-sequentially
complete Hausdorff topological space, X, that comprises a $\aleph$-sequentially
complete set of binary $\aleph$-sequences with a clopen base of cardinality
$\aleph$ and with no discrete or isolated points, X is the union
of $\aleph+1$-many nowhere dense subsets for $\aleph\ge\aleph_{0}$. 
\end{thm}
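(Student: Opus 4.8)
The plan is to establish the two directions of the claimed equivalence with GCH; the ``not provable in ZFC'' clause then follows because GCH is independent of ZFC (and in particular unprovable in it).

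\textbf{GCH $\Rightarrow$ the covering property.} This direction is almost immediate. As in the arguments preceding Theorems 3 and 9, a $\aleph$-sequentially complete set of binary $\aleph$-sequences with no discrete or isolated points has cardinality $2^{\aleph}$. Under GCH, $2^{\aleph}=\aleph+1$, so $X$ can be well-ordered as $\langle x_{\alpha}:\alpha<\aleph+1\rangle$. Each singleton $\{x_{\alpha}\}$ is closed (it contains its unique limit point) and nowhere dense (no clopen interval is a single point, since $X$ has no isolated points), and $X=\bigcup_{\alpha<\aleph+1}\{x_{\alpha}\}$. Hence $X$ is a union of $\aleph+1$-many nowhere dense subsets.

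\textbf{The covering property $\Rightarrow$ GCH.} Here I would run the forcing construction sketched in the text. Work in a transitive outer model $M$ of ZFC of cardinality $\aleph+1$. Inside $M$, fix a dense $\aleph+1$-sequence of $X$ split into two interleaved dense subsequences $\langle x_{\alpha}\rangle$ and $\langle y_{\alpha}\rangle$ with $x_{\gamma}\ne y_{\delta}$ for all $\gamma,\delta$, and fix the offset function $r$ governing the preservation/deletion construction of Theorem 8. For each $\beta<\aleph+1$ let $X_{\beta}=X_{\beta}(\langle x_{1},\ldots,x_{\beta}\rangle;\langle y_{1},\ldots,y_{\beta}\rangle)$ be the $\aleph$-sequentially complete closed nowhere dense set that preserves the first $\beta$ of the $x$'s and deletes the first $\beta$ of the $y$'s; such sets exist because a branch of $\aleph$ nodes can always be covered by an $\aleph$-sized family of pairwise disjoint clopen intervals each of cardinality $\aleph$. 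Take the forcing conditions to be the approximations $f:\beta\to\{Y:Y\subseteq X\}$ with $f(\gamma)=X_{\gamma}$ for $\gamma<\beta<\aleph+1$, ordered by extension (with $r$ held fixed); since $X_{\gamma}\subseteq X_{\alpha}$ whenever $\langle x_{1},\ldots,x_{\alpha}\rangle$ extends $\langle x_{1},\ldots,x_{\gamma}\rangle$, this is a partial order. Let $F=\bigcup_{\beta<\aleph+1}f_{\beta}$ be the object obtained from a generic filter, and let $G=\bigcup_{\beta<\aleph+1}g_{\beta}$ be the symmetric object with the roles of the $x$'s and $y$'s reversed. I would then verify, following the displayed chain of equalities in the text: (i) $F$ is a function, since distinct outputs of $F$ track back to distinct outputs of some $f_{\beta}$, hence to distinct parameter sequences (each $X_{\beta}$ is determined, up to a many-to-one correspondence, by $\langle x_{1},\ldots,x_{\beta}\rangle$); (ii) $ran(F)=X-\bigcup_{\beta<\aleph}\{y_{\beta}\}$, because any $x\in X$ with $x\ne y_{\beta}$ for all $\beta$ that is not yet in the range can be made a preserved point from some stage $\gamma<\aleph+1$ onwards; and (iii) symmetrically $ran(G)=X-\bigcup_{\beta<\aleph}\{x_{\beta}\}$, so $F\cup G$ maps $\aleph+1$ onto $X$. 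Since $|X|=2^{\aleph}$, this yields $\aleph+1\ge 2^{\aleph}$; combined with Cantor's theorem $2^{\aleph}\ge\aleph+1$ we get $2^{\aleph}=\aleph+1$, i.e.\ GCH (for every infinite $\aleph$, hence GCH in full).

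The main obstacle is the step flagged in the text itself: in an \emph{arbitrary} model the union of $\aleph+1$ dense closed nowhere dense sets need not exhaust $X$, because some branch (some $\aleph$-sequence) could be covered by members of $X$ lying in those sets while belonging to none of them. The point of passing to the thin outer model $M$ is precisely that every binary $\aleph$-sequence of $M$ is named by one of the $\aleph+1$ conditions, so that the generic family genuinely reaches every point of $X^{M}$; making rigorous that $F\cup G$ is \emph{onto} is where $\aleph$-sequential completeness and the bound $|X|=2^{\aleph}$ do the real work, via the fact that a strictly descending $\aleph$-sequence of non-empty clopen intervals pins down a single point. A secondary point to check is that the sequence $\langle X_{\alpha}:\alpha<\aleph+1\rangle$ must stabilize below $\aleph+1$ (otherwise preserving $\aleph+1$ points would return all of $X$ as a single ``Cantor set'', contradicting nowhere-density), which the same single-point observation handles.
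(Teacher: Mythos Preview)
Your proposal is essentially the same as the paper's own argument: the GCH $\Rightarrow$ covering direction via singletons is exactly the paper's ``conversely'' paragraph, and your forcing/outer-model construction of $F$, $G$, and the surjection $F\cup G:\aleph+1\to X$ (together with Cantor's theorem) reproduces the paper's main construction in Section~5, including the stabilization remark and the caveat that the union of $\aleph+1$ closed nowhere dense sets need not equal $X$ in an arbitrary model. There is no substantive difference in strategy or in the key steps.
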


The construction showing that $X$ is the union of $\le\aleph+1$
closed nowhere dense sets is naturally constructed in $V_{\alpha}$
for some $\alpha>o(\aleph_{0})=\omega$, and uses the following argument:
if a counterexample could be produced, the construction could be applied
to the counterexample, showing that the counterexample would not be
an actual counterexample. It natural to think of these constructions
taking place in a modal model of ZFC (such as the S4 modal model of
\cite{key-7}). As a reminder, an S4-modal model of set theory is
a 4-tuple $\langle G,R,D^{G},F\rangle$, where $G$ is a set of forcing
conditions, $R$ is a reflexive and transitive relation on $G$, $D^{G}$
is the domain of sets corresponding to $G$ and $F$ is a mapping
from forcing conditions to quantifier-free sentences in set theory
with constants in $D^{G}$ such that $V$ can be extended to all sentences
in set theory with $D^{G}$ by means of the forcing relation $\Vdash$.
We have $p\Vdash A$ if $A\in F(p)$, $p\Vdash\neg X$ if $p\nVdash X$,
$p\Vdash X\wedge Y$ if $p\Vdash X$ and $p\Vdash Y$, $p\Vdash X\vee Y$
if $p\Vdash X$ or $p\Vdash Y$, $p\Vdash(\exists x)P(x)$ if $p\Vdash P(d)$
for some $d\in D^{G}$, $p\Vdash(\forall x)P(x)$ if $p\Vdash P(d)$
for all $d\in D^{G}$, $p\Vdash\Square X$ if $q\Vdash X$ for every
$q\in G$ such that $R(p,q)$, and $p\Vdash\lozenge X$ if $q\Vdash X$
for some $q\in G$ such that $R(p,q)$. In a modal model a sentence
of set theory $X$ is translated to a sentence of modal set theory
written $\llbracket X\rrbracket$, by induction: $\llbracket A\rrbracket=\Square\lozenge A$
for atomic A, $\llbracket\neg X\rrbracket=\Square\lozenge\neg\llbracket X\rrbracket$,
$\llbracket X\wedge Y\rrbracket=\Square\lozenge(\llbracket X\rrbracket\wedge\llbracket X\rrbracket$,
$\llbracket X\vee Y\rrbracket=\Square\lozenge(\llbracket X\rrbracket\wedge\llbracket X\rrbracket$,
$\llbracket(\exists x)P(x)\rrbracket=\Square\lozenge((\exists x)\llbracket P(x)\rrbracket$,
and $\llbracket(\forall x)P(x)\rrbracket=\Square\lozenge((\forall x)\llbracket P(x)\rrbracket$,
and it is proven that the translation of every instance of an axiom
of $ZFC$ is true for each forcing condition of the model. The model
that we have constructed is then $\langle\{x_{\alpha},y_{\alpha},X_{\alpha}:\alpha<\aleph+1\},\subseteq,2^{\aleph},F:\{x_{\alpha}y_{\alpha},X_{\alpha}\}\rightarrow\{x_{\alpha\le\aleph}\in X_{\beta\ge\alpha},y_{\alpha}\notin X_{\beta\ge\alpha}\}\rangle$,
where $\{x_{\alpha}y_{\alpha},X_{\alpha}\}$ are as described in Theorem
\ref{thm:Baire}. 

\section{A Generalized Metric Model of Set Theory}

We can also use the fact (see Theorem \ref{thm:A--sequentially-complete})
that any initial segment of $V$, $V_{\alpha}=2^{\aleph}$ for some
cardinal $\aleph$, can be considered as a $\langle2^{\aleph},\oplus\rangle$-generalized
metric space with the Baire topology on any set $X\subseteq2^{\aleph}$
comprising binary $\aleph$-sequences (or equivalently $\aleph$-sequences
of real numbers). That is to say, that for infinite $\alpha$ and
$\aleph$ $V_{\alpha}$ can be represented as a clopen interval $([0,1])[\aleph]$
for binary $\aleph$-sequences for some length $\aleph$. As we have
seen, if all real numbers in the $\aleph$-sequence start with the
same number ($0.$ in the case of $([0,1])[\aleph]$) then all binary
$\aleph$-sequences can still be represented (by ignoring the constant
number before the binary point). It is therefore reasonable to represent
$V_{\alpha}$ as a clopen interval $([0,1])[\aleph]$ for binary $\aleph$-sequences
for some length $\aleph$. That $V_{\alpha}$ is a generalized metric
space does not alter what sets exist, as those sets will be sets of
binary $\aleph$-sequences (for example most will not be $\aleph$-sequentially
complete); the constraint of being a generalized metric space only
determines how far apart points in the space are.\\
\\
It is then possible to decide the membership of $X$ in $<\aleph+1$
steps by enumeration as follows. Consider a clopen interval, $([0,1])[\aleph]$,
which is linearly ordered lexicographically, \emph{i.e.} $z<y$ if
$(\exists\alpha<\aleph)[(z_{\alpha}<y_{\alpha})\wedge(\forall\beta<\alpha)(z_{\beta}=y_{\beta})$
for $w_{\alpha}$ the $\alpha$-th binary member of the $\aleph$-sequence
$w$, and assume that each binary $\aleph$-sequence $z$ in $2^{\aleph}$
is marked with 1 or 0 depending whether $z\in X$ or not, which is
decidable only if you find the location of $z$ in the interval. The
latter assumption reflects the fact that when you search for $x$
in a linearly ordered set it is either present in its place in the
order (when $x\in X$) or it is not (when $x\notin X)$. Before we
begin the construction, we will need the ability to divide a binary
$\aleph$-sequence (of a $\aleph$-sequence of real numbers) by 2.
This is just standard binary division by 2 with carries to the right
if necessary.\\
\\
To start the construction, bisect the interval to give a point $m=\langle0.1,0,0,0,\ldots\rangle$.
Now set $r:=m$. If the midpoint $r=x$ then we can decide whether
$x\in X$ or $x\notin X$ and stop. Otherwise test whether $x<r.$
If $x<r$ then consider the clopen interval $([0,r])[\aleph]$; and
if $x>r$ consider the clopen interval $([r,1])[\aleph]$. Iterate
the bisection construction as follows\footnote{The clopen intervals are not subsets of $X$ in general but are subsets
of $2^{\aleph}$.}. $cl_{1}=([0,1])[\aleph]$, $cl_{\alpha+1}=Bi(cl_{\alpha};x)$ and
$cl_{\lambda}=\bigcap_{\alpha<\lambda}cl_{\alpha}$ for limit ordinal
$\lambda$ (which is the unique maximal clopen interval $\subseteq([0,1])[\aleph]$
such that for all $z\in cl_{\lambda}$ the initial $\lambda$-sequence
of $z$ is $x[\lambda]:=\langle x_{\alpha}:\alpha<\lambda\rangle$),
\emph{i.e}. $([x[\lambda]\parallel\langle0,0,0,\ldots\rangle,x[\lambda]\parallel\langle1,1,1,\ldots\rangle])$),
where $\parallel$ is concatenation, $\langle0,0,0,\ldots\rangle$
and $\langle1,1,1,\ldots\rangle$ are $\aleph$-sequences that stand
for $\aleph$ concatenated $\omega$-sequences $\langle0.0,0,0,\ldots\rangle$
and $\langle0.1,1,1,\ldots\rangle$ respectively, $Bi(([a,b])[\aleph];x)=([a,r])[\aleph]$
and $x_{\alpha}=0$ if $([a,b])[\aleph]=cl_{\alpha}$ and $x<r$ for
the midpoint $r=(b-a)/2$, $Bi(([a,b])[\aleph];x)=([r,b])[\aleph]$
and $x_{\alpha}=1$ if $([a,b])[\aleph]=cl_{\alpha}$ and $x>r$,
and the iteration stops if $x=r$ (and one can decide whether $r\in X)$.
It is clear that the construction will terminate in $\le\aleph$ steps,
as a nested sequence of clopen intervals can only comprise $\aleph$
members, as that is how many bits there are in the single binary $\aleph$-sequence
in any non-empty intersection of a nested sequence of clopen intervals.
If $x\in X$ has not been confirmed in $<\aleph$ steps, then at the
$\aleph$ step $cl_{\aleph}=([x,x])=\{x\}$, and at ordinal step $\aleph+1$
(\emph{i.e.} 1 after $\aleph)$ we can then decide whether $x\in X$
given that $x$ has been located.\\
\\
\includegraphics[scale=0.5]{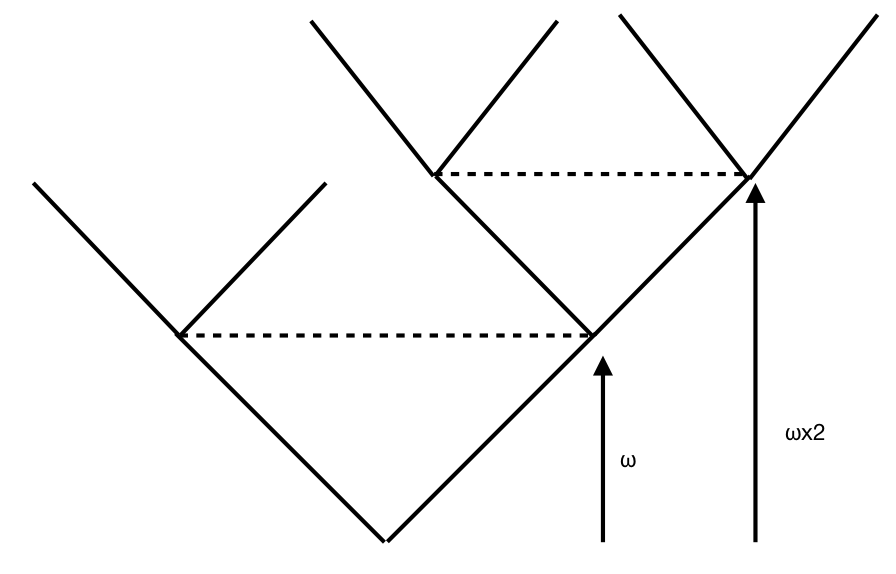}\\
\\
\emph{Figure 6: The hierarchy of clopen intervals of $\aleph$-sequences
produced by iterated bisection of a clopen interval. }\\
\\
But the condition that $x\in X$ can be decided by enumeration in
cardinal $<\aleph+1$ steps is equivalent to GCH as can be shown as
follows on the assumption that a function which decides $x\in X$
for all $x$ has $\aleph+1$ bits. This assumption follows from a
principle of information minimization since any function $f$ that
decides $x\in X$ cannot contain $\le\aleph$ bits as $f$ could be
represented by some binary $\le\aleph$ sequence, a potential member
of $X$. We can express this by means of a diagonal function $d(\left\lceil y\right\rceil ):=1-\left\lceil y\right\rceil (\left\lceil y\right\rceil )$
for $\left\lceil y\right\rceil $ a $\aleph$-bit code for a function
of $\aleph$-bits, and note that we get a contradiction if we put
$d:=\left\lceil y\right\rceil $ unless the number of bits in $d$
is greater than the number of bits in $\left\lceil y\right\rceil $.
It follows from a principle of information minimization that $f$
also contains $\aleph+1$ bits of information if $\aleph$ is an infinite
cardinal because, if we consider a generic $\aleph$-sequence $x$,
then there are many ordinals $\alpha$ of the same cardinality as
$\aleph$, and $\aleph+1$ bits would suffice to decide whether $x\in X$
or not by considering the least upper bound of $\alpha$ for generic
$\aleph\le\alpha<\aleph+1$-sequences that could be used to decide
$x\in X.$
\begin{thm}
\label{thm:GCH-is-equivalent}(Not provable in ZFC) GCH is equivalent
to\footnote{Strictly the inference from the information limitation principle to
GCH is probabilistic (true almost always) in cardinality terms rather
than logically necessary. } the assertion that the amount of information needed to decide the
relation $x\in X$ by an interleaved enumeration of $X$ or $2^{\aleph}-X$
is $<\aleph+1$, for any given binary $\aleph$-sequence x of length
at most cardinal $\aleph\ge\aleph_{0}$ and X has cardinality $\le2^{\aleph}$. 
\end{thm}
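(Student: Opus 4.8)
The plan is to prove the biconditional in two directions, using as the common tool the bisection construction that precedes Figure~6 (the hierarchy $cl_{1}=([0,1])[\aleph]$, $cl_{\alpha+1}=Bi(cl_{\alpha};x)$, $cl_{\lambda}=\bigcap_{\alpha<\lambda}cl_{\alpha}$ for limit $\lambda$). This construction is the concrete realisation of the ``interleaved enumeration of $X$ or $2^{\aleph}-X$'': at each stage one either hits the midpoint $r=x$, at which point the single bit marking $x$ decides $x\in X$ versus $x\in 2^{\aleph}-X$, or one passes to the half containing $x$; running the two searches (for $x$ inside $X$ and for $x$ inside $2^{\aleph}-X$) in tandem is exactly following the branch $x$ down the tree of clopen intervals. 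Since a strictly descending $\aleph$-sequence of non-empty clopen intervals pins down exactly one point (Theorem~\ref{thm:A--sequentially-complete}; equivalently $cl_{\aleph}=([x,x])=\{x\}$ by the construction itself), the decision is available at step $\aleph+1$, so the search has length $\le\aleph$.

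For the direction \emph{assertion $\Rightarrow$ GCH}, I would first record the diagonal lower bound: a function $f$ deciding $x\in X$ uniformly in $x$ cannot be presented by an $\aleph$-bit code, since putting $d:=\lceil f\rceil$ into $d(\lceil y\rceil):=1-\lceil y\rceil(\lceil y\rceil)$ gives $f(\lceil f\rceil)=1-f(\lceil f\rceil)$; hence $f$ carries strictly more than $\aleph$ bits, i.e. at least $\aleph+1$. The hypothesis — that the interleaved enumeration decides $x\in X$ below $\aleph+1$ — supplies, via the information-minimization principle, the matching upper bound: among the ordinals $\alpha$ with $\aleph\le\alpha<\aleph+1$ at which the bisection for a generic $x$ already returns an answer, the least one suffices, so the minimal amount of information that decides $x\in X$ for every $x$ is exactly $\aleph+1$. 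But that same deciding datum is nothing but the marking $z\mapsto[z\in X]$ on $2^{\aleph}$, hence an object of $2^{\aleph}$ bits; equating the two counts gives $2^{\aleph}=\aleph+1$, and with Cantor's theorem, which gives $2^{\aleph}\ge\aleph+1$ (the successor cardinal), this is GCH at $\aleph$. The degenerate cases — a linear rather than branching representation of $X$, or a base of clopen intervals of cardinality $<2^{\aleph}$ — are disposed of exactly as in the discussion preceding Theorem~\ref{thm:Baire} (remove isolated points; discard the small intervals), the marking then occupying $<2^{\aleph}$ bits so that the count is only easier.

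For the converse \emph{GCH $\Rightarrow$ assertion}, GCH gives $2^{\aleph}=\aleph+1$, so the characteristic function of $X$ — equivalently the whole $X$-dependent part of the bisection procedure — occupies exactly $\aleph+1$ bits; combined with the termination of the construction in $\le\aleph$ steps this bounds the information needed to decide $x\in X$ by the interleaved enumeration strictly below the successor of $\aleph$, which is the assertion. One also checks the base case $\aleph=\aleph_{0}$ (the classical bisection of Cantor space, where the assertion specialises to CH) and, for $|X|<2^{\aleph}$, that the marking then has $<2^{\aleph}$ bits and the argument goes through a fortiori.

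The main obstacle — and the reason for the ``not provable in ZFC'' flag — is the information-minimization step: the lower bound ($>\aleph$ bits) is a theorem of ZFC, but the upper bound, that $f$ needs no more than $\aleph+1$ bits, is not. It is the extra principle, and, as the footnote concedes, the inference it powers is only generically (``probabilistically'') valid, since a pathological $X$ could in principle force a deciding function past $\aleph+1$ bits precisely when $2^{\aleph}>\aleph+1$. Making the argument rigorous therefore amounts either to adopting the minimization principle as an axiom — after which the theorem reduces to the cardinal bookkeeping above — or, as in the next section, to exhibiting a model (the generalized metric model of $V$ carrying an information-minimization principle) in which the principle holds and in which, accordingly, every bisection search is confined below $\aleph^{+}$ if and only if $2^{\aleph}=\aleph^{+}$. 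Two further small points need attention: that ``number of bits of a deciding function'' is independent of the coding $\lceil\cdot\rceil$ chosen, and that the binary division by $2$ used to form the midpoints $m=\langle0.1,0,0,0,\ldots\rangle$ behaves correctly at limit stages, so that $cl_{\lambda}$ really is the maximal clopen interval with initial segment $x[\lambda]$.
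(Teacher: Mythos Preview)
Your route diverges from the paper's in both directions, most visibly in the forward one. You treat the ``interleaved enumeration'' as the bisection construction and argue by equating two bit-counts of the global deciding function $f$: diagonalisation plus information-minimization pin it at exactly $\aleph+1$, while identifying $f$ with the characteristic marking $z\mapsto[z\in X]$ gives $2^{\aleph}$ bits, whence $2^{\aleph}=\aleph+1$. The paper instead runs a \emph{reductio}: it posits an $X$ of intermediate cardinality $\aleph<c<2^{\aleph}$ and performs a cardinal case analysis --- displayed as two tables --- on the number of steps a \emph{literal listing} of the elements of $X$ or of $2^{\aleph}-X$ needs to reach $x$ (the bisection construction is only preparatory motivation, not the mechanism of the proof). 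From the $x\in X$ column it extracts $c=\aleph+1$, from the $x\notin X$ column $2^{\aleph}=\aleph+1$; since both $X$ and $2^{\aleph}-X$ are non-empty, both hold, contradicting $c<2^{\aleph}$. Your argument is more conceptual, but it leans on the tacit claim that $\chi_X$ is incompressible (genuinely carries $2^{\aleph}$ bits of information in the minimal sense), which is only generically true and would itself need the minimization principle to justify; the paper's reductio on a single hypothetical intermediate $X$ sidesteps that issue entirely.

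For the converse the paper is again more elementary than your proposal: it drops the bisection and simply observes that under GCH an enumeration of $X$ locates any $x\in X$ in $<|X|\le 2^{\aleph}=\aleph+1$ steps, while an enumeration of $2^{\aleph}-X$ locates any $x\notin X$ in $<|2^{\aleph}-X|=2^{\aleph}=\aleph+1$ steps; either way the decision costs $<\aleph+1$ bits. Your version mixes the characteristic-function bit-count with the $\le\aleph$-step termination of the bisection, which is correct in outcome but more circuitous than the paper's two-line enumeration bound.
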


\begin{proof}
Assume that:
\begin{enumerate}
\item $\emptyset\subseteq X\subseteq2^{\aleph}$, 
\item \emph{$X$ }has cardinality $\aleph<c<2^{\aleph}$,
\item Any $x\in X$ is expressed as a binary sequence of length at most
cardinal $\aleph\ge\aleph_{0}$, and
\item The amount of information needed to decide the relation $x\in X$
by an interleaved enumeration of $X$ or $2^{\aleph}-X$ is $<\aleph+1$. 
\end{enumerate}
The proof is summarized in the tables below, where a $\checked$ means
that the option is possible and $\times$ means that the option is
impossible. \\

\begin{tabular}{|c|c|c|}
\hline 
 &
Enumerate $X$ &
Enumerate $2^{\aleph}-X$\tabularnewline
\hline 
\hline 
$x\in X$ &
$<c$ $\checked$ &
$2^{\aleph}$ $\times$\tabularnewline
\hline 
$x\notin X$ &
$c$ $\times$ &
$<2^{\aleph}$ $\checked$\tabularnewline
\hline 
\end{tabular}\\

\emph{Table 1: The number of steps to decide $x\in X$ by enumeration}\\
\\
\begin{tabular}{|c|c|c|c|}
\hline 
$<c$ &
Proof Ref. &
$c$ &
Proof Ref.\tabularnewline
\hline 
\hline 
$\aleph+1<c$ $\times$ &
1 &
$\aleph+1<c$ $\times$ &
4\tabularnewline
\hline 
$\aleph+1=c$ $\checked$ &
2 &
$\aleph+1=c$ $\times$ &
5\tabularnewline
\hline 
$\aleph+1>c$ $\times$ &
3 &
$\aleph+1>c$ $\times$ &
3\tabularnewline
\hline 
\end{tabular}\\
\\
\\
\begin{tabular}{|c|c|c|c|}
\hline 
$<2^{\aleph}$ &
Proof Ref. &
$2^{\aleph}$ &
Proof Ref.\tabularnewline
\hline 
\hline 
\textbf{$\aleph+1<2^{\aleph}$ }$\times$ &
1 &
$c<2^{\aleph}$ $\times$ &
8\tabularnewline
\hline 
\textbf{$\aleph+1=2^{\aleph}$ }$\checked$ &
6 &
$c<2^{\aleph}$ $\times$ &
8\tabularnewline
\hline 
\textbf{$\aleph+1>2^{\aleph}$ }$\times$ &
7 &
$c<2^{\aleph}$ $\times$ &
8\tabularnewline
\hline 
\end{tabular}\\
\\
\emph{Table 2: The possible cardinal relationships for the number
of steps in Table 1 and proof references}\\
\emph{}\\
Proof references:\\
\\
1. $x\in X$ would almost always be decided in $\ge\aleph+1$ bits
for a given enumeration of $X$, contradicting assumption d). \\
2. $\aleph+1=c$ is consistent with assumption d), as $x\in X$ would
be decided in $<c=\aleph+1$ steps by enumeration.\\
3. $\aleph+1>c$ contradicts assumption b) $\aleph<c$, as there would
be a cardinal strictly between $\aleph$ and $\aleph+1$.\\
4. $x\in X$ would almost always be decided in $>\aleph+1$ bits for
a given enumeration of $X$, contradicting assumption d). \\
5. $\aleph+1=c$ implies that $\aleph+1$ bits are needed to decide
$x\in X$ by enumerating all of $X$, which contradicts assumption
d).\\
6. \textbf{$\aleph+1=2^{\aleph}$ }is consistent with assumption d),
as $x\in X$ would be decided in $<2^{\aleph}=\aleph+1$ steps by
enumeration.\\
7. \textbf{$\aleph+1>2^{\aleph}$ }contradicts Cantor's theorem tha\textbf{t
$\aleph+1\le2^{\aleph}$.}\\
8. $c<\left|2^{\aleph}-X\right|=2^{\aleph}$ and therefore $x\in X$
could always be decided in $<2^{\aleph}$ steps by enumeration of
$X$.\\

We can conclude that if $x\in X$ then $c=\aleph+1$ and if $x\notin X$
then $\aleph+1=2^{\aleph}$. Using predicate logic\footnote{Existential elimination: for example, assume $(\exists x)(x\in X)$
and $(\forall x)(x\in X\rightarrow c=\aleph+1)$, then if $c\neq\aleph+1$
then by contraposition $(\forall x)(x\notin X)$ and hence $\neg(\exists x)(x\in X)$,
contradiction; hence $c=\aleph+1$.} we can conclude $(\exists x)(x\in X)\rightarrow c=\aleph+1$ and
$(\exists x)(x\in2^{\aleph}-X)\rightarrow\aleph+1=2^{\aleph}$. Since
both \emph{X} and $2^{\aleph}-X$ are not empty we can conclude that
$c=\aleph+1=2^{\aleph}$, which contradicts the assumption that $c<2^{\aleph}$.
GCH then follows.\\
\\
Conversely, assume GCH. Then if $x\in X$ then by GCH $x$ will be
enumerated in $<\left|X\right|\le2^{\aleph}=\aleph+1$ steps. While
if $x\notin X$ then $x$ will be enumerated in $<\left|2^{\aleph}-X\right|=2^{\aleph}=\aleph+1$
steps. In either case then $x\in X$ can be decided by enumeration
in $<\aleph+1$ steps, \emph{i.e.} in $<\aleph+1$ bits. 
\end{proof}
\begin{rem}
What this result shows that if the class of all pure sets $V$ is
considered to be a hierarchy of $\langle2^{\aleph},\oplus\rangle$-generalized
metric spaces, then GCH holds based on a principle of information
minimization. It is of course not true that the class of all pure
sets in $V$ needs to be a hierarchy of $\langle2^{\aleph},\oplus\rangle$-
generalized metric spaces, but it is a natural construction of $V$
based on a natural topology of sets.
\end{rem}

\section{Alternative Definition of Cardinality}

Having shown that there are models of ZFC in which Theorems 10 and
12 are true, we can now redefine cardinality to reflect cardinality
in this model (in which GCH is true). In terms of the normal definition
of cardinality, $2^{\aleph}-(\aleph+1)\times2^{\aleph}=\slashed{O}$
is not surprising; it simply says that $2^{\aleph}-max(\aleph+1,2^{\aleph})=2^{\aleph}-2^{\aleph}=\slashed{O}$.
However, the fact that $\aleph+1$ is the least cardinal number with
the property that $2^{\aleph}-(\aleph+1)\times2^{\aleph}=\slashed{O}$
is forced in the Baire topology suggests a modification to the definition
of cardinal number. Intuitively, the idea is that iterating the closed
nowhere dense set construction in a dense way on a nowhere dense set
will produce ever sparser nowhere dense sets, but the deletion of
$\aleph+1$ such nowhere dense sets in a dense way results in the
empty set. Cardinality in these terms then measures how sparse a set
can be before it ceases to exist. Or, in the spirit of the Baire Category
Theorem, cardinality measures how many negligible sets you need to
add together before a non-negligible set is formed. \\
\\
This argument suggests a change of definition of cardinality, namely
a set \emph{X} (represented as a tree \emph{T}) has cardinality $\aleph$
if $\aleph$ is the largest cardinal such that every dense (in the
sense that every non-empty open set of the tree has non-empty intersection
with the sequence), non-repeating sequence of (clopen) splitting subtrees
of \emph{T} of length $\aleph$ has an empty remainder after removal
of $\aleph$ subtrees in this sequence from \emph{T}, while it is
possible to construct a non-empty remainder after removal of any subsequence
of length $<\aleph$. This construction is always possible because
the number of splitting subtrees is the same as the number of nodes
and the branch length, and it is always possible to delete $<\aleph-$many
paths (\emph{i.e}. $\aleph$-sequences that are not branches) or branches
in a way that leaves a dense sequence of splitting subtrees.\footnote{The condition to delete $<\aleph-$many branches ensures that a tree
of cardinality $\aleph$ does not have cardinality $\aleph+1$ } This is so by the result in Section 4 that any set $X$ with a dense-in-itself
subset and of cardinality $\aleph<c\le2^{\aleph}$ has, under the
Baire topology, the property that $c-c\times(\aleph+1)=\slashed{O}$
and it is possible to force $c-c\times\aleph\ne\slashed{O}$ using
the closed nowhere dense set construction.\\
\\
In logical terms the change in definition of cardinality can be stated
as follows:
\begin{itemize}
\item $\left|T\right|=\alpha\leftrightarrow(P(T,\alpha)\wedge(\forall\gamma:Card(\gamma))(\gamma>\alpha\rightarrow\neg P(T,\gamma))$,
where
\item $\alpha$ is a cardinal, $Card(\alpha)$
\item \emph{T} is a binary tree with a root
\item $P(T,\beta):=(\forall\langle u_{\eta<\beta}\rangle:S(T,\langle u_{\eta<\beta}\rangle))(\bigcap_{\eta<\beta}u_{\eta}=\slashed{O})\wedge(\forall\gamma<\beta)$\\
$(\exists\langle u_{\delta<\gamma}\rangle:S(T,\langle u_{\delta<\gamma}\rangle))(\bigcap_{\delta<\gamma}u_{\gamma}\neq\slashed{O}$)
\item $S(T,\langle u_{\eta<\xi}\rangle):=NR(\langle u_{\eta<\xi}\rangle)\wedge D(T,\langle u_{\eta<\xi}\rangle)\wedge C(T,\langle u_{\eta<\xi}\rangle)$
\item $NR(\langle u_{\eta<\xi}\rangle):=(\forall\theta<\xi)(\forall\lambda<\xi)(\forall u_{\theta}\in\langle u_{\eta<\xi}\rangle)(\forall u_{\lambda}\in\langle u_{\eta<\xi}\rangle)$\\
$(u_{\theta}=u_{\lambda}\rightarrow\theta=\lambda)$ {[}non-repeating
sequence{]}
\item $D(T,\langle u_{\eta<\xi}\rangle):=(\exists\theta\leq\alpha)(\forall w_{\delta}\in\langle w_{\eta<\theta}\rangle:B(T,\langle w_{\eta<\theta}\rangle)(\exists u_{\kappa}\in\langle u_{\eta<\xi}\rangle)(w_{\delta}\cap u_{\kappa}\neq\slashed{O})$
{[}dense sequence{]}
\item $C(T,\langle u_{\eta<\xi}\rangle):=(\forall\rho<\xi)(u_{\rho}\neq\slashed{O}\wedge Clopensplit(T,u_{\rho}))$
{[}sequence of non-empty clopen splitting subtrees{]}
\item $B(T,\langle u_{\eta<\xi}\rangle):=C(T,\langle u_{\eta<\xi}\rangle)\wedge(T=\bigcup_{\delta<\xi}u_{\delta})$
{[}clopen basis for tree \emph{T}{]}
\item $Clopensplit(T,u):=(\exists x)(\exists\beta:Ord(\beta))(u=\{y\in T:y\ne x\wedge(\forall\gamma<\beta)(x_{\gamma}=y_{\gamma})\})$
\item $Ord(\alpha):=(\alpha=\emptyset\vee(\exists\beta<\alpha)(Ord(\beta)\wedge\alpha=\beta\cup\{\beta\})\vee(\alpha=\bigcup_{Ord(\beta):\beta<\alpha}\beta)$)
\item $Card(\alpha):=Ord(\alpha)\wedge(\neg\exists\beta<\alpha)[Ord(\beta))\wedge(\exists f:\beta\rightarrow\alpha)Sur(f)]$ 
\item $Sur(f):=Fn(f:X\rightarrow Y))\wedge(\forall y)(\exists x)(y=f(x))$
\item $Fn(f:X\rightarrow Y):=(\forall x\in X)(\exists y\in Y)(y=f(x))\wedge\forall x\in X)(\forall y\in Y)[x=y\rightarrow f(x)=f(y)]$
\end{itemize}
In the case of sets of natural numbers, a number $n$ can be represented
by a sequence $\langle1,\dots,1,0,\ldots\rangle$, \emph{i.e.} $n$
1s and then a terminal $\omega$-sequence of 0s. Removal of a dense
sequence of clopen subtrees is visually the removal of all of the
terminal $\omega$-sequence of 0s (and of course the initial sequence
of 1s) because in the discrete topology each set $\{n\}$, \emph{i.e.}
<1, Thus $\aleph_{0}-\aleph\times\aleph_{0}=\slashed{O}$ has solution
$\aleph=\aleph_{0};$ and $N$ has cardinality $\aleph_{0}$ and a
finite set with $n$ members has cardinality $n$, as before.\\
\\
It can be seen that according to the modified definition of cardinality
$2^{\aleph}=\aleph+1$ for all cardinals $\aleph\ge\aleph_{0}$, and
there are no cardinals $\aleph+1<\beth<2^{\aleph}$, \emph{i.e.} that
the Generalized Continuum Hypothesis is true in the sense of the new
definition of cardinality.

\section{Conclusions}

There is a natural way to measure size of sets, which is given by
how many clopen intervals need to be deleted in a dense way from a
binary tree of binary $\aleph$-sequences before the empty set results
(or a countable set of isolated points). This definition of cardinality
works well for a set universe which satisfies the Axiom of Choice,
when all sets of size $\le2^{\aleph}$ are sets of binary $\aleph$-sequences.
The price to be paid for the use of a Baire topology (in which clopen
intervals exist) is that the Baire topology is pathological in several
respects: clopen sets are totally disconnected\footnote{A topological space $X$ is \emph{totally disconnected} if for every
two points $x,y\in X$ such that $x\ne y$ there are disjoint open
sets $O_{1}$ and $O_{2}$ such that $x\in O_{1}$, $y\in O_{2}$
and $O_{1}\cup O_{2}=X$. } by definition, and Baire topological spaces, such as clopen interval
$([0,1])[\aleph]$, comprise a  set of binary $\aleph$-sequences
are not compact or metrizable (for $\aleph>\aleph_{0}$). That said,
since all sets can be regarded as $\aleph$-tuples of real numbers,
there is a natural generalized metric and in a $\aleph$-sequentially
complete topological space, every strictly nested decreasing $\aleph$-sequence
of clopen intervals is a set with a single element. The class of sets
is then quite well behaved under the assumption of the Axiom of Choice,
although this good behaviour does not extend to the properties of
sets that can be created using the Axiom of Choice (see \cite{key-5}
for a selection of such sets).

\end{document}